





\documentclass[sn-mathphys,Numbered]{sn-jnl}

\usepackage{graphicx}%
\usepackage{multirow}%
\usepackage{amsmath,amssymb,amsfonts}%
\usepackage{amsthm}%
\usepackage{mathrsfs}%
\usepackage[title]{appendix}%
\usepackage{xcolor}%
\usepackage{textcomp}%
\usepackage{manyfoot}%
\usepackage{booktabs}%
\usepackage{algorithm}%
\usepackage{algorithmicx}%
\usepackage{algpseudocode}%
\usepackage{listings}%

\theoremstyle{thmstyleone}%
\newtheorem{theorem}{Theorem}
%

\theoremstyle{thmstyletwo}%

\theoremstyle{thmstylethree}%
\newtheorem{definition}{Definition}%

\raggedbottom

\begin{document}
	
	\title[Continuous generalized atomic subspaces for operators]{Continuous generalized atomic subspaces for operators in Hilbert spaces}
	

	\author*[1]{\fnm{Mohamed} \sur{Rossafi}}\email{rossafimohamed@gmail.com; mohamed.rossafi1@uit.ac.ma}
	\equalcont{These authors contributed equally to this work.}
	
		\author[2]{\fnm{Fakhr-dine} \sur{Nhari}}\email{nharidoc@gmail.com}
	\equalcont{These authors contributed equally to this work.}
	
	\author[3]{\fnm{Abdeslam} \sur{Touri}}\email{touri.abdo68@gmail.com}
	\equalcont{These authors contributed equally to this work.}

	\affil[1]{\orgdiv{Laboratory Partial Differential Equations, Spectral Algebra and Geometry}, \orgname{Higher School of Education and Training, University Ibn Tofail}, \orgaddress{\city{Kenitra}, \country{Morocco}}}
	
	\affil[2]{\orgdiv{Laboratory Analysis, Geometry and Applications, Department of Mathematics}, \orgname{Faculty of Sciences, University Ibn Tofail}, \orgaddress{\city{Kenitra}, \country{Morocco}}}
	
	\affil[3]{\orgdiv{Department of Mathematics}, \orgname{Faculty of Sciences, University Ibn Tofail}, \orgaddress{\city{Kenitra}, \country{Morocco}}}

	\abstract{	In this paper, we introduce the concept of continuous $g-$atomic subspace for a bounded linear operator and gives several useful continuous resolution of the identity operator on a Hilbert space by implies  the theory of continuous $g-$fusion frames. Moreover, we introduce the concept of continuous frame operator for a pair of continuous $g-$fusion bessel sequences.}

	\keywords{Continous g-fusion Frame, continuous K-g-fusion frame, continuous g-atomic subspace, Hilbert space.}

	\pacs[MSC Classification]{41A58; 42C15.}
	
	\maketitle
	
\section{Introduction}

Discret frames were introduced by Duffin and Schaeffer in 1952 \cite{Duf} for studing some profound problems in nonharmonic Fourier series, after the fundamental paper \cite{DGM} by Daubechies, Grossman and Meyer, frame theory began to be widely used, particularly in the more specialized context of wavelet frames and Gabor frames.

The concept of generalization of frames was proposed by G. Kaiser \cite{KAI} and independently by Ali, Antoine and Gazeau \cite{GAZ} to a family indexed by some locally compact space endowed with a Radon measure. These frames are known as continuous frames. Gabrado and Han in \cite{GHN} called these frames Frames associated with measurable spaces, Askari-Hemmat, Dehghan and Radjabalipour in \cite{AHDR} called them generalized frames and in mathematical physics they are referred to as Coherent states \cite{COH}.

A continuous g-frame (or simply a c-g-frame) was firstly introduced by Abdollahpour and Faroughi in \cite{AF} and is an extension of g-frames and continuous frames. Recently, continuous g-frames in Hilbert spaces have been studied intensively. 

Fusion frames were considered by Casazza, Kutyniok and Li in connection with distributed processing and are related to the construction of global frames \cite{CCO}. The fusion
frame theory is in fact more delicate due to complicated relations between the structure
of the sequence of weighted subspaces and the local frames in the subspaces and also due
to the extreme sensitivity to changes of the weights.

Recently, E.Alizadeh, A. Rahimi, E. Osgooei, M. Rahmani have presented continuous $K-g-$fusion frame in  \cite{EAEM}. For more detailed information on frame theory, readers are recommended to consult: \cite{Assila, Ghiati, Karara, Massit, Rossafi, Kabbaj, Chouchene}.

Throught this paper, $U$, $(\Theta, \mu)$ and $\{U_{w}\}_{w\in\Theta}$ will be a separable Hilbert space, a measure space with positive measure $\mu$ and a family of Hilbert spaces, respectively, $\pi_{V}$ is the orthogonal projection from $U$ onto a closed subspace $V$ and $B(U,U_{w})$ will be denoted by $B(U)$. Also, $\mathbb{U}$ will be the collection of all closed subspaces of $U$, and $v:\Theta\rightarrow \mathbb{R}^{+}$ is a measurable mapping such that $v\neq 0$ a.e.

We begin with a few preliminaries that will be needed.
\begin{theorem}\cite{DOG}\label{lem1}
	Let $L,T\in B(U)$. Then the following conditions are equivalent:
	\begin{itemize}
		\item[(1)] $\mathcal{R}(L)\subseteq \mathcal{R}(T)$.
		\item[(2)] $LL^{\ast}\leq \lambda^{2}TT^{\ast}$, for some $\lambda>0$.
		\item[(3)] $L=TS$ for some bounded linear operator $S$ on $U$.
	\end{itemize}
\end{theorem}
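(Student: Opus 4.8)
The plan is to route all three equivalences through the factorization statement $(3)$, proving $(3)\Rightarrow(1)$, $(3)\Rightarrow(2)$, then $(2)\Rightarrow(3)$ and $(1)\Rightarrow(3)$. This closes the cycle, since $(3)$ then sits between the range-inclusion condition $(1)$ and the majorization condition $(2)$, each of which both implies and follows from it.

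The two implications out of $(3)$ are immediate. If $L=TS$, then $Lx=T(Sx)\in\mathcal{R}(T)$ for every $x\in U$, which gives $(3)\Rightarrow(1)$. For $(3)\Rightarrow(2)$ I would compute $LL^{\ast}=TSS^{\ast}T^{\ast}$ and use $SS^{\ast}\leq\|S\|^{2}I$ to obtain $LL^{\ast}\leq\|S\|^{2}TT^{\ast}$, so that $\lambda=\|S\|$ works.

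The analytic heart is $(2)\Rightarrow(3)$. From $LL^{\ast}\leq\lambda^{2}TT^{\ast}$ I first extract the pointwise estimate $\|L^{\ast}x\|\leq\lambda\|T^{\ast}x\|$ for all $x\in U$. This lets me define a map $D$ on $\mathcal{R}(T^{\ast})$ by $D(T^{\ast}x)=L^{\ast}x$: the estimate guarantees that $D$ is well defined (if $T^{\ast}x=T^{\ast}y$ then $L^{\ast}x=L^{\ast}y$) and bounded with $\|D\|\leq\lambda$. I then extend $D$ by continuity to $\overline{\mathcal{R}(T^{\ast})}$ and by zero on its orthogonal complement $\ker T$, obtaining $D\in B(U)$ with $DT^{\ast}=L^{\ast}$. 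Taking adjoints yields $L=TD^{\ast}$, so $S=D^{\ast}$ does the job.

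For $(1)\Rightarrow(3)$ I would invoke the closed graph theorem. Since $\mathcal{R}(L)\subseteq\mathcal{R}(T)$, for each $x$ there is a unique $Sx\in(\ker T)^{\perp}$ with $T(Sx)=Lx$; uniqueness holds because $T$ is injective on $(\ker T)^{\perp}$, and it also forces $S$ to be linear. To see that $S$ is bounded, suppose $x_{n}\to x$ and $Sx_{n}\to z$; then $Lx=\lim T(Sx_{n})=Tz$ with $z\in(\ker T)^{\perp}$, so $z=Sx$ by uniqueness and the closed graph theorem applies. The main obstacle is the passage $(2)\Rightarrow(3)$: the delicate point is verifying that $D$ is well defined on $\mathcal{R}(T^{\ast})$ and extends to a bounded operator on all of $U$, for which the pointwise norm estimate together with the orthogonal decomposition $U=\overline{\mathcal{R}(T^{\ast})}\oplus\ker T$ are exactly what is needed.
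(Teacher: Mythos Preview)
Your proof is correct and follows the classical argument of Douglas. Note, however, that the paper does not prove this theorem at all: it is quoted from \cite{DOG} as a preliminary result, with no proof supplied. So there is no ``paper's own proof'' to compare against; your argument is essentially the standard one from the original reference.
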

\begin{theorem}\cite{KJP}
	The set $\mathcal{S}(U)$ of all self-adjoint operators on $U$ is a partially ordered set with respect to the partial order $\leq$ which is defined as for $T,S\in\mathcal{S}(U)$
	\begin{equation*}
		T\leq S\Longleftrightarrow \langle Tf,f\rangle \leq \langle Sf,f\rangle, \quad \forall f\in U .
	\end{equation*}
\end{theorem}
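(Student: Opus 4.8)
The plan is to verify the three defining properties of a partial order---reflexivity, antisymmetry, and transitivity---for the relation $\leq$ on $\mathcal{S}(U)$. First I would record a preliminary observation: for any self-adjoint $T$ the scalar $\langle Tf,f\rangle$ is real, since $\langle Tf,f\rangle=\langle f,Tf\rangle=\overline{\langle Tf,f\rangle}$. This guarantees that the defining inequalities compare genuine real numbers and are therefore well posed.

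Reflexivity is immediate, as $\langle Tf,f\rangle\leq\langle Tf,f\rangle$ holds trivially for every $f\in U$. Transitivity is equally direct: if $T\leq S$ and $S\leq R$, then for each $f$ one chains the scalar inequalities $\langle Tf,f\rangle\leq\langle Sf,f\rangle\leq\langle Rf,f\rangle$, whence $T\leq R$.

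The substantive step is antisymmetry. Suppose $T\leq S$ and $S\leq T$; then $\langle Tf,f\rangle=\langle Sf,f\rangle$ for every $f$, so putting $A:=T-S\in\mathcal{S}(U)$ we obtain $\langle Af,f\rangle=0$ for all $f\in U$, and the goal becomes to conclude $A=0$. The key tool is the polarization identity, which expresses the full sesquilinear form $\langle Af,g\rangle$ in terms of the diagonal values $\langle A(f\pm g),f\pm g\rangle$ together with, over $\mathbb{C}$, the analogous expressions involving $f\pm ig$. Since every diagonal term vanishes, polarization forces $\langle Af,g\rangle=0$ for all $f,g\in U$; taking $g=Af$ then yields $\|Af\|^{2}=0$, so $Af=0$ for each $f$ and hence $A=0$, i.e. $T=S$.

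I expect the only real obstacle to be this passage from vanishing quadratic form to vanishing operator. Over a complex Hilbert space it goes through by polarization alone, without invoking self-adjointness; over a real Hilbert space the hypothesis $A\in\mathcal{S}(U)$ becomes essential, because the real polarization identity recovers only the symmetric part of the bilinear form, and self-adjointness is precisely what identifies that symmetric part with the form itself. Thus the self-adjointness built into $\mathcal{S}(U)$ is exactly what makes the antisymmetry axiom hold in full generality.
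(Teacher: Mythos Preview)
Your proof is correct and complete. However, note that the paper does not actually supply a proof of this statement: it is quoted as a preliminary result from the reference \cite{KJP} and stated without proof, so there is no argument in the paper to compare yours against. Your verification of reflexivity and transitivity is routine, and your treatment of antisymmetry via the polarization identity is the standard approach; the closing remark distinguishing the real and complex cases is a nice touch, correctly identifying why self-adjointness is genuinely needed over $\mathbb{R}$ but superfluous over $\mathbb{C}$ for the passage from $\langle Af,f\rangle=0$ to $A=0$.
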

\begin{theorem}\cite{PGT}\label{lem4}
	Let $V\subset U$ be a closed subspace and $T\in B(U)$. Then 
	\begin{equation*}
		\pi_{V}T^{\ast}=\pi_{V}T^{\ast}\pi_{\overline{TV}}.
	\end{equation*}
	If $T$ is a unitary operator, $i.e. \quad T^{\ast}T=I_{H}$, then $\pi_{\overline{TV}}T=T\pi_{V}$.
\end{theorem}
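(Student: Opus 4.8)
The plan is to prove each identity by applying both sides to an arbitrary vector and using the defining adjoint relation $\langle T^\ast g, v\rangle = \langle g, Tv\rangle$ together with the inclusion $TV \subseteq \overline{TV}$. In both parts the mechanism is the same: decompose the ambient space into a subspace and its orthogonal complement, show that $T$ (or $T^\ast$) sends one summand into the relevant closed subspace and the other into its complement, and then read off the action of the orthogonal projection.

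For the first identity, I would rewrite the claim as $\pi_V T^\ast (I - \pi_{\overline{TV}}) = 0$ and note that $I - \pi_{\overline{TV}}$ is precisely the orthogonal projection onto $(\overline{TV})^\perp$. Thus it suffices to show $\pi_V T^\ast g = 0$ for every $g \in (\overline{TV})^\perp$, which is equivalent to $T^\ast g \in V^\perp$. For any $v \in V$ one has $Tv \in TV \subseteq \overline{TV}$, hence
\begin{equation*}
\langle T^\ast g, v\rangle = \langle g, Tv\rangle = 0 ,
\end{equation*}
so $T^\ast g \perp V$ and therefore $\pi_V T^\ast g = 0$. Applying this to $g = (I - \pi_{\overline{TV}})f$ for arbitrary $f \in U$ yields the identity.

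For the second identity, assume $T^\ast T = I_U$. Fix $f \in U$ and split $f = \pi_V f + (I - \pi_V)f$ with $\pi_V f \in V$ and $(I-\pi_V)f \in V^\perp$. Applying $T$ gives $T\pi_V f \in TV \subseteq \overline{TV}$, while the isometry hypothesis forces the remaining term into the orthogonal complement: for $u \in V^\perp$ and $v \in V$,
\begin{equation*}
\langle Tu, Tv\rangle = \langle T^\ast T u, v\rangle = \langle u, v\rangle = 0 ,
\end{equation*}
so $T(V^\perp) \perp TV$, and by continuity of the inner product $T(V^\perp) \perp \overline{TV}$. Consequently $\pi_{\overline{TV}}$ fixes $T\pi_V f$ and annihilates $T(I-\pi_V)f$, giving $\pi_{\overline{TV}} T f = T \pi_V f$; since $f$ was arbitrary, $\pi_{\overline{TV}} T = T\pi_V$.

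I expect the only genuinely delicate point to be the passage from $TV$ to its closure $\overline{TV}$: the orthogonality computations are carried out first on the (not necessarily closed) subspace $TV$, and one must invoke continuity of the inner product, equivalently the fact that $W^\perp = \overline{W}^{\,\perp}$ for any subspace $W$, to transfer the orthogonality to $\overline{TV}$. Everything else reduces to routine bookkeeping with orthogonal projections and the single algebraic identity $T^\ast T = I_U$, which is the only place the isometry hypothesis is used.
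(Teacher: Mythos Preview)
Your proof is correct in both parts: the first identity follows exactly as you argue by showing $T^{\ast}$ maps $(\overline{TV})^{\perp}$ into $V^{\perp}$, and the second by decomposing along $V\oplus V^{\perp}$ and using the isometry relation $T^{\ast}T=I_{U}$ to see that $T(V^{\perp})\subseteq (\overline{TV})^{\perp}$. The handling of the closure via $(TV)^{\perp}=(\overline{TV})^{\perp}$ is the right observation.

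There is nothing to compare against here: the paper does not prove this theorem but quotes it from \cite{PGT} (G\u{a}vru\c{t}a) as a preliminary result, with no argument supplied. Your self-contained proof is therefore a genuine addition rather than a variant of anything in the paper.
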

\begin{definition}\cite{EAEM}
	Let $F:\Omega \rightarrow \mathbb{U}$ be such that for each $f\in U$, the mapping $w\mapsto \pi_{F(w)}f$ is measurable and let $\{U_{w}\}_{w\in\Theta}$ be a collection of Hilbert spaces for each $w\in \Theta$. Suppose that $\chi_{w}\in B(F(w), U_{w})$ and put $\chi=\{\chi_{w}\in B(F(w),U_{w}),w\in\Theta\}$. Then $\{F(w), \chi_{w},v(w)\}_{w\in\Theta}$ is a continuous $g-$fusion frame for $U$ if there exist $0<A\leq B<\infty$ such that for all $f\in U$
	\begin{equation*}
		A\|f\|^{2}\leq \int_{\Theta}v^{2}(w)\|\chi_{w}\pi_{F(w)}f\|^{2}d\mu(w)\leq B\|f\|^{2}.
	\end{equation*}
	where $\pi_{F(w)}$ is the orthogonal projection onto the subspace $F(w)$.
	
	$\{F(w),\chi_{w},v(w)\}_{w\in\Theta}$ is called a tight continuous $g-$fusion frame for $U$ if $A=B$, and Parseval if $A=B=1$. $\{F(w),\chi_{w},v(w)\}_{w\in\Theta}$ is called a Bessel continuous $g-$fusion for $U$ if we only have the upper bound.
	
	Let $U_{0}=\oplus_{w\in\Theta}U_{w}$ and $L^{2}(\Theta, U_{0})$ be a collection of all measurable functions $\phi :\Theta\rightarrow U_{0}$ such that for each $w\in\Theta$, $\phi(w)\in U_{w}$ and $\int_{\Theta}\|\phi(w)\|^{2}d\mu(w)<\infty$.
	
	It can be verified that $L^{2}(\Omega, U_{0})$ is a Hilbert space with inner product defined by 
	\begin{equation*}
		\langle \phi, \psi \rangle =\int_{\Theta}\langle \phi(w),\psi(w)\rangle d\mu(w)
	\end{equation*}
	for $\phi, \psi \in L^{2}(\Theta, U_{0})$ and the representation space in this setting is $L^{2}(\Theta, U_{0})$.
\end{definition}

\begin{definition}\cite{EAEM}
	Let $F: \Theta\rightarrow \mathbb{U}$ be such that for each $f\in U$, the mapping $w\mapsto \pi_{F(w)}f$ is weakly measurable, $K\in B(U)$ and let 
	\begin{equation*}
		\chi=\{\chi_{w}\in B(F(w), U_{0}):w\in\Theta\}.	
	\end{equation*}
	Then $\{F(w),\chi_{w},v(w)\}_{w\in\Theta}$ is a continuous $K-g-$fusion frame for $U$, if there exist $0<A\leq B<\infty$ such that for all $f\in U$.
	\begin{equation}\label{eq1}
		A\|K^{\ast}f\|^{2}\leq \int_{\Theta}v^{2}(w)\|\chi_{w}\pi_{F(w)}f\|^{2}d\mu(w)\leq B\|f\|^{2}
	\end{equation}
	where $\pi_{F(w)}$ is the orthogonal projection of $U$ onto the subspace $F(w)$.
\end{definition}
$\{F(w),\chi_{w},v(w)\}_{w\in\Theta}$ is called a tight continuous $K-g-$fusion frame for $U$ if $A=B$, and Parseval if $A=B=1$. $\{F(w),\chi_{w}, v(w)\}_{w\in\Theta}$ is called a Bessel continuous $g-$fusion for $U$ if the right-hand inequality in \eqref{eq1} holds.

Since each continuous $K-g-$fusion frame is continuous $g-$fusion Bessel, so the synthesis analysis and continuous $K-g-$fusion frame operators are defined. Inded, the synthesis operator is defined weakly as follows
\begin{equation*}
	T:L^{2}(\Omega, U_{0})\rightarrow U
\end{equation*}
\begin{equation*}
	\langle T(\phi),f\rangle =\int_{\Theta}v(w)\langle \pi_{F(w)}\chi_{w}^{\ast}(\phi(w)),f\rangle d\mu(w)
\end{equation*}
where $\phi\in L^{2}(\Theta, U_{0})$ and $f\in U$. It is obvious that $T$ bounded linear operator. Its adjoint, that is called  analysis operator 
\begin{equation*}
	T^{\ast}: U\rightarrow L^{2}(\Theta, U_{0})
\end{equation*}
\begin{equation*}
	T^{\ast}(f)(w)=v(w)\chi_{w}\pi_{F(w)}f,\quad f\in U
\end{equation*}
\begin{definition}\cite{EAEM}
	Suppose that $\{F(w),\chi_{w},v(w)\}_{w\in\Theta}$ is a continuous $K-g-$fusion frame bounds $A$ and $B$. We define $S:U\rightarrow U$ by 
	\begin{equation*}
		\langle Sf,g\rangle =\int_{\Theta}v^{2}(w)\langle \pi_{F(w)}\chi_{w}^{\ast}\chi_{w}\pi_{F(w)}f,g\rangle d\mu(w)
	\end{equation*}
	and we call it the continuous $K-g-$fusion frame operator.
\end{definition}
\begin{theorem}\cite{EAEM}\label{lem3}
	Let $\{F(w),\chi_{w},v(w)\}_{w\in\Theta}$ be a continuous $g-$fusion  Bessel  for $U$. then $\{F(w),\chi_{w},v(w)\}_{w\in\Theta}$ is a continuous $K-g-$fusion frame for $U$ if and only if there exists $A>0$ such that $S\geq AKK^{\ast}$ where $S$ is continuous $K-g-$fusion frame operator.
\end{theorem}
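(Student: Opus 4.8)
The plan is to reduce both implications to a single observation: the quadratic form of the frame operator $S$ reproduces exactly the integral appearing in the frame inequality. First I would compute $\langle Sf,f\rangle$ directly from the definition of the continuous $K$-$g$-fusion frame operator. Setting $g=f$ in the defining formula and using that $\pi_{F(w)}$ is an orthogonal projection (hence self-adjoint and idempotent), the integrand becomes $\langle \pi_{F(w)}\chi_{w}^{\ast}\chi_{w}\pi_{F(w)}f,f\rangle=\langle \chi_{w}\pi_{F(w)}f,\chi_{w}\pi_{F(w)}f\rangle=\|\chi_{w}\pi_{F(w)}f\|^{2}$, so that
\begin{equation*}
	\langle Sf,f\rangle=\int_{\Theta}v^{2}(w)\|\chi_{w}\pi_{F(w)}f\|^{2}d\mu(w),\qquad f\in U.
\end{equation*}
This identity is the bridge between the integral in \eqref{eq1} and the operator $S$, and it is the only computation of substance in the argument.

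For the forward implication, assume $\{F(w),\chi_{w},v(w)\}_{w\in\Theta}$ is a continuous $K$-$g$-fusion frame with lower bound $A>0$. The left inequality of \eqref{eq1} together with the identity above reads $A\|K^{\ast}f\|^{2}\leq\langle Sf,f\rangle$ for all $f$. Rewriting $\|K^{\ast}f\|^{2}=\langle KK^{\ast}f,f\rangle$, this is $\langle AKK^{\ast}f,f\rangle\leq\langle Sf,f\rangle$ for every $f\in U$. Since $AKK^{\ast}$ and $S$ are both self-adjoint, the partial-order characterization of self-adjoint operators stated above yields $S\geq AKK^{\ast}$.

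For the converse, assume there exists $A>0$ with $S\geq AKK^{\ast}$. The hypothesis that the family is a continuous $g$-fusion Bessel sequence furnishes a bound $B>0$ with $\int_{\Theta}v^{2}(w)\|\chi_{w}\pi_{F(w)}f\|^{2}d\mu(w)\leq B\|f\|^{2}$, which is precisely the right-hand inequality of \eqref{eq1}. On the other hand, from $S\geq AKK^{\ast}$ and the quadratic-form identity, $\int_{\Theta}v^{2}(w)\|\chi_{w}\pi_{F(w)}f\|^{2}d\mu(w)=\langle Sf,f\rangle\geq A\langle KK^{\ast}f,f\rangle=A\|K^{\ast}f\|^{2}$, which is the left-hand inequality of \eqref{eq1}. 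Hence both bounds hold and the family is a continuous $K$-$g$-fusion frame.

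I do not expect a genuine obstacle here: the content lies entirely in recognizing the identity for $\langle Sf,f\rangle$ and invoking the partial-order characterization of self-adjoint operators. The only points that need care are that the Bessel assumption is exactly what guarantees $S$ is a well-defined, bounded, positive self-adjoint operator, so that the ordering $\leq$ is meaningful and the upper bound $B$ is available for the converse; and that $KK^{\ast}$ is self-adjoint, so that $AKK^{\ast}$ and $S$ both lie in $\mathcal{S}(U)$ and may legitimately be compared in the partial order.
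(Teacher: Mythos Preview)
Your argument is correct and is exactly the standard one: the quadratic-form identity $\langle Sf,f\rangle=\int_{\Theta}v^{2}(w)\|\chi_{w}\pi_{F(w)}f\|^{2}\,d\mu(w)$ translates the lower frame inequality \eqref{eq1} into the operator inequality $AKK^{\ast}\leq S$ and conversely, with the Bessel hypothesis supplying the upper bound. Note, however, that the paper does not actually prove this theorem; it is quoted from \cite{EAEM}, so there is no in-paper proof to compare against---your write-up is precisely the argument one would give.
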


\section{Continuous resolution of the identity operator in continuous $g-$fusion frame}
\begin{theorem}
	Let $\{F(w),\chi_{w},v(w)\}_{w\in\Theta}$ be a continuous $g-$fusion frame for $U$ with bounds $C,D$ and $S_{\Lambda}$ be its associated continuous $f-$fusion frame operator. Then the family $\{v^{2}(w)\pi_{F(w)}\chi_{w}^{\ast}T_{w}\}_{w\in\Theta}$ is the continuous of the identity operator on $U$, where $T_{w}=\chi_{w}\pi_{F(w)}S^{-1}_{\chi}$, $\forall w\in\Theta$. Furthermore, for all $f\in U$ we have, 
	\begin{equation*}
		\frac{C}{D^{2}}\|f\|^{2}\leq \int_{\Theta}v^{2}(w)\|T_{w}f\|^{2}d\mu(w)\leq \frac{D}{C^{2}}\|f\|^{2}.
	\end{equation*}
\end{theorem}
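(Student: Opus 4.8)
The plan is to exploit the two defining properties of the continuous $g$-fusion frame operator $S_{\chi}$: that it reproduces every vector through the weak integral
\[
	\langle S_{\chi}f,g\rangle=\int_{\Theta}v^{2}(w)\langle \pi_{F(w)}\chi_{w}^{\ast}\chi_{w}\pi_{F(w)}f,g\rangle\, d\mu(w),
\]
and that the frame bounds translate into the operator inequality $C\,I_{U}\leq S_{\chi}\leq D\,I_{U}$, which in particular makes $S_{\chi}$ a positive, self-adjoint, invertible operator. Everything else follows by substituting the definition $T_{w}=\chi_{w}\pi_{F(w)}S_{\chi}^{-1}$ and bookkeeping.

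First I would establish the continuous resolution of the identity. Substituting $T_{w}$ gives $v^{2}(w)\pi_{F(w)}\chi_{w}^{\ast}T_{w}=v^{2}(w)\pi_{F(w)}\chi_{w}^{\ast}\chi_{w}\pi_{F(w)}S_{\chi}^{-1}$. Pairing the weak integral of this family with an arbitrary $f,g\in U$ and moving the fixed bounded operator $S_{\chi}^{-1}$ outside the integral, the remaining integrand is exactly the one defining $S_{\chi}$ evaluated at $S_{\chi}^{-1}f$, so the integral equals $\langle S_{\chi}(S_{\chi}^{-1}f),g\rangle=\langle f,g\rangle$. Hence the family integrates weakly to $I_{U}$. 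The only point needing care is the justification that the constant operator $S_{\chi}^{-1}$ may be pulled through the weak operator-valued integral, which is immediate once the integrand is recognized as that of $S_{\chi}$ applied to $S_{\chi}^{-1}f$.

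Next I would derive the frame bounds for $\{T_{w}\}_{w\in\Theta}$. Since $\|T_{w}f\|=\|\chi_{w}\pi_{F(w)}(S_{\chi}^{-1}f)\|$, applying the continuous $g$-fusion frame inequality to the vector $S_{\chi}^{-1}f$ yields
\[
	C\,\|S_{\chi}^{-1}f\|^{2}\leq \int_{\Theta}v^{2}(w)\|T_{w}f\|^{2}\, d\mu(w)\leq D\,\|S_{\chi}^{-1}f\|^{2}.
\]
It then remains to control $\|S_{\chi}^{-1}f\|$ from both sides. From $C\,I_{U}\leq S_{\chi}\leq D\,I_{U}$ and positivity, the spectrum of $S_{\chi}$ lies in $[C,D]$, so $S_{\chi}^{-1}$ is positive self-adjoint with spectrum in $[1/D,1/C]$; hence $\|S_{\chi}^{-1}f\|\leq \tfrac{1}{C}\|f\|$, while $\|f\|=\|S_{\chi}S_{\chi}^{-1}f\|\leq D\|S_{\chi}^{-1}f\|$ gives $\|S_{\chi}^{-1}f\|\geq \tfrac{1}{D}\|f\|$. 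Inserting $\tfrac{1}{D^{2}}\|f\|^{2}\leq \|S_{\chi}^{-1}f\|^{2}\leq \tfrac{1}{C^{2}}\|f\|^{2}$ into the displayed inequality produces exactly $\frac{C}{D^{2}}\|f\|^{2}\leq \int_{\Theta}v^{2}(w)\|T_{w}f\|^{2}\,d\mu(w)\leq \frac{D}{C^{2}}\|f\|^{2}$.

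I expect the main obstacle to be purely technical rather than conceptual: making the weak-integral manipulation rigorous and confirming the two-sided norm estimate on $S_{\chi}^{-1}$. In particular one must avoid conflating the quadratic-form bound $\langle S_{\chi}^{-1}f,f\rangle\leq \tfrac{1}{C}\|f\|^{2}$ with the norm bound $\|S_{\chi}^{-1}f\|\leq \tfrac{1}{C}\|f\|$; these agree here only because $S_{\chi}^{-1}$ is positive and self-adjoint, which is precisely what the operator inequality $C\,I_{U}\leq S_{\chi}\leq D\,I_{U}$ guarantees.
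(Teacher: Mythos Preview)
Your proposal is correct and follows essentially the same route as the paper: write $f=S_{\chi}S_{\chi}^{-1}f$ to obtain the resolution of the identity, apply the frame inequality to $S_{\chi}^{-1}f$, and then convert $C\,I_{U}\le S_{\chi}\le D\,I_{U}$ into the two-sided norm estimate $\tfrac{1}{D}\|f\|\le\|S_{\chi}^{-1}f\|\le\tfrac{1}{C}\|f\|$. The only cosmetic difference is that you phrase the first step weakly and invoke the spectrum of $S_{\chi}$ explicitly, whereas the paper writes the integral identity directly and uses $\|S_{\chi}\|\le D$, $\|S_{\chi}^{-1}\|\le 1/C$ without further comment.
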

\begin{proof}
	We have $S_{\chi}$ is invertible, then for each $f\in U$
	\begin{align*}
		f=S_{\chi}S_{\chi}^{-1}f&=\int_{\Theta}v^{2}(w)\pi_{F(w)}\chi_{w}^{\ast}\chi_{w}\pi_{F(w)}S_{\chi}^{-1}fd\mu(w)\\&=\int_{\Theta}v^{2}(w)\pi_{F(w)}\chi_{w}^{\ast}T_{w}d\mu(w).
	\end{align*}
	Therefore, $\{v^{2}(w)\pi_{F(w)}\chi_{w}^{\ast}T_{w}\}_{w\in\Theta}$ is a continuous resolution of the identity operator on $U$.
	
	Moreover, for each $f\in U$ we have 
	\begin{align*}
		\int_{\Theta}v^{2}(w)\|T_{w}(f)\|^{2}d\mu(w)&=\int_{\Theta}v^{2}(w)\|\chi_{w}\pi_{F(w)}S_{\chi}^{-1}f\|^{2}d\mu(w)\\&\leq D\|S_{\chi}^{-1}f\|^{2}\\&\leq D\|S_{\chi}^{-1}\|^{2}\|f\|^{2}\\&\leq \frac{D}{C^{2}}\|f\|^{2}.
	\end{align*}
	On the other hand 
	\begin{equation*}
		C\|S_{\chi}^{-1}f\|^{2}\leq \int_{\Theta}v^{2}(w)\|T_{w}f\|^{2}d\mu(w),
	\end{equation*}
	since for each $f\in U$
	\begin{equation*}
		\|f\|^{2}=\|S_{\chi}S_{\chi}^{-1}f\|^{2}\leq \|S_{\chi}\|^{2}\|S_{\chi}^{-1}f\|^{2},
	\end{equation*}
	then 
	\begin{equation*}
		\|S_{\chi}\|^{-2}\|f\|^{2}\leq \|S_{\chi}^{-1}f\|^{2},
	\end{equation*}
	therefore 
	\begin{equation*}
		C\|S_{\chi}\|^{-2}\|f\|^{2}\leq \int_{\Theta}v^{2}(w)\|T_{w}f\|^{2}d\mu(w),
	\end{equation*}
	hence
	\begin{equation*}
		\frac{C}{D^{2}}\|f\|^{2}\leq \int_{\Theta}v^{2}(w)\|T_{w}f\|^{2}d\mu(w).
	\end{equation*}
\end{proof}

\begin{theorem}\label{th1}
	Let $\{F(w),\chi_{w},v(w)\}_{w\in\Theta}$ be a continuous $g-$fusion frame for $U$ with bounds $C,D$ and let $T_{w}:U\rightarrow U_{w}$ be a bounded operator such that $\{v^{2}(w)\pi_{F(w)}\chi_{w}^{\ast}T_{w}\}_{w\in\Theta}$ is a continuous resolution of the identity operator on $U$. Then 
	\begin{equation*}
		\frac{1}{D}\bigg|\bigg|\int_{\Theta}v^{2}(w)\pi_{F(w)}\chi_{w}^{\ast}T_{w}fd\mu(w)\bigg|\bigg|^{2}\leq \int_{\Theta}v^{2}(w)\|T_{w}f\|^{2}d\mu(w),\quad \forall f\in U.
	\end{equation*}		
\end{theorem}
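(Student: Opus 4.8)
The plan is to first observe that, since $\{v^{2}(w)\pi_{F(w)}\chi_{w}^{\ast}T_{w}\}_{w\in\Theta}$ is a continuous resolution of the identity, the vector-valued integral appearing inside the norm on the left-hand side equals $f$ exactly. Hence the whole statement collapses to proving the lower frame-type bound $\frac{1}{D}\|f\|^{2}\leq \int_{\Theta}v^{2}(w)\|T_{w}f\|^{2}d\mu(w)$ for every $f\in U$. This is the real content, and it is a duality argument.

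To obtain it, I would pair $f$ against an arbitrary $g\in U$ and expand $f$ through the resolution of the identity, writing
$$\langle f,g\rangle=\int_{\Theta}v^{2}(w)\langle \pi_{F(w)}\chi_{w}^{\ast}T_{w}f,g\rangle d\mu(w).$$
Using that $\pi_{F(w)}$ is a self-adjoint projection and that $\chi_{w}^{\ast}$ is the adjoint of $\chi_{w}$, each integrand rewrites as $\langle \chi_{w}^{\ast}T_{w}f,\pi_{F(w)}g\rangle=\langle T_{w}f,\chi_{w}\pi_{F(w)}g\rangle$. This exhibits $\langle f,g\rangle$ as the weighted $L^{2}$-type pairing $\int_{\Theta}\langle v(w)T_{w}f,\,v(w)\chi_{w}\pi_{F(w)}g\rangle d\mu(w)$.

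Applying the Cauchy--Schwarz inequality to this pairing gives
$$|\langle f,g\rangle|\leq \Big(\int_{\Theta}v^{2}(w)\|T_{w}f\|^{2}d\mu(w)\Big)^{1/2}\Big(\int_{\Theta}v^{2}(w)\|\chi_{w}\pi_{F(w)}g\|^{2}d\mu(w)\Big)^{1/2}.$$
The second factor is controlled by the upper continuous $g$-fusion frame bound, namely $\int_{\Theta}v^{2}(w)\|\chi_{w}\pi_{F(w)}g\|^{2}d\mu(w)\leq D\|g\|^{2}$. Substituting this bound and then specializing $g=f$ yields $\|f\|^{2}\leq \sqrt{D}\,\|f\|\big(\int_{\Theta}v^{2}(w)\|T_{w}f\|^{2}d\mu(w)\big)^{1/2}$; dividing by $\|f\|$ (the case $f=0$ being trivial), squaring, and rearranging produces $\frac{1}{D}\|f\|^{2}\leq \int_{\Theta}v^{2}(w)\|T_{w}f\|^{2}d\mu(w)$, which together with the collapse of the left-hand norm to $\|f\|^{2}$ is the claimed inequality.

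The argument is short and the only point requiring care, rather than a genuine obstacle, is the handling of the weak/vector-valued integrals: one must justify moving the adjoint operators inside the inner product under the integral sign and applying Cauchy--Schwarz to the weighted integral pairing. Once the resolution-of-identity identity is used to identify the inner integral with $f$, the remainder is the standard combination of Cauchy--Schwarz with the Bessel (upper) bound $D$.
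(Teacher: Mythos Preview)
Your proposal is correct and follows essentially the same route as the paper: move the adjoints inside the inner product, apply Cauchy--Schwarz to the weighted $L^{2}$ pairing, and invoke the upper frame bound $D$. The only cosmetic difference is that the paper names the integral $g$ and bounds $\|g\|^{4}=\langle g,g\rangle^{2}$ directly (never explicitly invoking the resolution-of-identity hypothesis, so its computation in fact holds without it), whereas you first use that hypothesis to identify the integral with $f$ and then run the same estimate on $\|f\|^{2}$; since $g=f$ under the hypothesis, the two arguments coincide.
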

\begin{proof}
	Let $f\in U$ and set $g=\int_{\Theta}v^{2}(w)\pi_{F(w)}\chi_{w}^{\ast}T_{w}fd\mu(w)$. Then 
	\begin{align*}
		\|g\|^{4}&=\langle g,g\rangle^{2}=\langle g,\int_{\Theta}v^{2}(w)\pi_{F(w)}\chi_{w}^{\ast}T_{w}fd\mu(w)\rangle^{2}\\&=\biggl(\int_{\Theta}v(w)\langle \chi_{w}\pi_{F(w)}g,v(w)T_{w}f\rangle d\mu(w)\biggr)^{2}\\&\leq \int_{\Theta}v^{2}(w)\|\chi_{w}\pi_{F(w)}g\|^{2}d\mu(w)\int_{\Theta}v^{2}(w)\|T_{w}f\|^{2}d\mu(w)\\&\leq D\|g\|^{2}\int_{\Theta}v^{2}(w)\|T_{w}f\|^{2}d\mu(w),
	\end{align*}
	so
	\begin{equation*}
		\frac{1}{D}\|g\|^{2}\leq \int_{\Theta}v^{2}(w)\|T_{w}f\|^{2}d\mu(w),
	\end{equation*}
	hence 
	\begin{equation*}
		\frac{1}{D}\bigg|\bigg|\int_{\Theta}v^{2}(w)\pi_{F(w)}\chi_{w}^{\ast}T_{w}fd\mu(w)\bigg|\bigg|\leq \int_{\Theta}v^{2}(w)\|T_{w}f\|^{2}d\mu(w).
	\end{equation*}
\end{proof}	
\begin{theorem}
	Let $\{F(w),\chi_{w},v(w)\}_{w\in\Theta}$ be a continuous $g-$fusion frame with bounds $C,D$ and let $T_{w}:U\rightarrow U_{w}$ be a bounded operator such that $\{v^{2}(w)\pi_{F(w)}\chi_{w}^{\ast}T_{w}\}_{w\in\Theta}$ is a continuous resolution of the identity operator on $U$. If $T_{w}^{\ast}\chi_{w}\pi_{F(w)}=T_{w}$, then 
	\begin{equation*}
		\frac{1}{D}\|f\|^{2}\leq \int_{\Theta}v^{2}(w)\|T_{w}f\|^{2}d\mu(w)\leq DE\|f\|^{2},\quad \forall f\in U.
	\end{equation*}
	Where $E=\sup_{w\in\Theta}\|T_{w}\|^{2}<\infty$.
\end{theorem}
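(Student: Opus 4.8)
The plan is to establish the two inequalities separately, drawing the lower bound from the preceding Theorem~\ref{th1} and the upper bound from the algebraic hypothesis $T_{w}^{\ast}\chi_{w}\pi_{F(w)}=T_{w}$ together with the Bessel bound $D$.

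For the lower bound, I would observe that the hypotheses here are exactly those of Theorem~\ref{th1}: $\{F(w),\chi_{w},v(w)\}_{w\in\Theta}$ is a continuous $g$-fusion frame with bounds $C,D$, and $\{v^{2}(w)\pi_{F(w)}\chi_{w}^{\ast}T_{w}\}_{w\in\Theta}$ is a continuous resolution of the identity. Hence Theorem~\ref{th1} applies verbatim and yields
\[
\frac{1}{D}\left\|\int_{\Theta}v^{2}(w)\pi_{F(w)}\chi_{w}^{\ast}T_{w}f\,d\mu(w)\right\|^{2}\leq \int_{\Theta}v^{2}(w)\|T_{w}f\|^{2}\,d\mu(w).
\]
But the resolution-of-the-identity property says precisely that the inner integral equals $f$, so the left-hand side collapses to $\frac{1}{D}\|f\|^{2}$, which is the desired lower estimate. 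No hypothesis on $T_{w}$ beyond the one already used in Theorem~\ref{th1} is needed for this half.

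For the upper bound, the key is to exploit the identity $T_{w}=T_{w}^{\ast}\chi_{w}\pi_{F(w)}$. Applying it to $f$ gives $T_{w}f=T_{w}^{\ast}\big(\chi_{w}\pi_{F(w)}f\big)$, and then submultiplicativity of the operator norm together with $\|T_{w}^{\ast}\|=\|T_{w}\|$ yields
\[
\|T_{w}f\|\leq \|T_{w}^{\ast}\|\,\|\chi_{w}\pi_{F(w)}f\|=\|T_{w}\|\,\|\chi_{w}\pi_{F(w)}f\|.
\]
Squaring, using $\|T_{w}\|^{2}\leq E=\sup_{w\in\Theta}\|T_{w}\|^{2}$, and integrating against $v^{2}(w)\,d\mu(w)$ reduces the left integral to $E\int_{\Theta}v^{2}(w)\|\chi_{w}\pi_{F(w)}f\|^{2}\,d\mu(w)$. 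Finally, the upper frame inequality of the continuous $g$-fusion frame bounds this remaining integral by $D\|f\|^{2}$, so the whole expression is at most $DE\|f\|^{2}$, as claimed.

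Both computations are routine; the only genuine point to watch is the bookkeeping in the upper bound, namely reading the hypothesis so that $T_{w}^{\ast}$ acts on the vector $\chi_{w}\pi_{F(w)}f$ and invoking the finiteness $E<\infty$, which is assumed. I would emphasize that the lower bound is essentially free once Theorem~\ref{th1} is in hand, and that the structural condition $T_{w}^{\ast}\chi_{w}\pi_{F(w)}=T_{w}$ enters only to convert the abstract resolution of the identity into an honest continuous $g$-fusion frame inequality with the explicit upper constant $DE$.
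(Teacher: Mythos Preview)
Your proof is correct and follows essentially the same route as the paper: the lower bound is obtained by invoking Theorem~\ref{th1} and then using the resolution-of-the-identity to replace the integral by $f$, while the upper bound comes from rewriting $T_{w}f=T_{w}^{\ast}\chi_{w}\pi_{F(w)}f$, estimating by $\|T_{w}\|\,\|\chi_{w}\pi_{F(w)}f\|$, taking the supremum $E$, and applying the Bessel bound $D$.
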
	
\begin{proof}
	Because $\{v^{2}(w)\pi_{F(w)}\chi_{w}^{\ast}T_{w}\}_{w\in\Theta}$ is a continuous resolution of the identity on $U$, then  
	\begin{equation*}
		f=\int_{\Theta}v^{2}(w)\pi_{F(w)}\chi_{w}^{\ast}T_{w}fd\mu(w),\quad\forall f\in U.
	\end{equation*}
	For each $f\in U$, using theorem \eqref{th1}, we have 
	\begin{align*}
		\frac{1}{D}\|f\|^{2}&=\frac{1}{D}\bigg|\bigg|\int_{\Theta}v^{2}(w)\pi_{F(w)}\chi_{w}^{\ast}T_{w}fd\mu(w)\bigg|\bigg|\leq \int_{\Theta}v^{2}(w)\|T_{w}f\|^{2}d\mu(w)\\&=\int_{\Theta}v^{2}(w)\|T^{\ast}_{w}\chi_{w}\pi_{F(w)}f\|^{2}d\mu(w)\\&\leq \int_{\Theta}v^{2}(w)\|T_{w}\|^{2}\|\chi_{w}\pi_{F(w)}f\|^{2}d\mu(w)\\&\leq E\int_{\Theta}v^{2}(w)\|\chi_{w}\pi_{F(w)}f\|^{2}d\mu(w)\\&\leq ED\|f\|^{2}.
	\end{align*}
\end{proof}
\begin{theorem}
	$\{F(w),\chi_{w},v(w)\}_{w\in\Theta}$ is a continuous $g-$fusion frame for $U$ if the following conditions hold:
	\begin{itemize}
		\item[(1)] For all $f\in U$ there exists $A>0$ such that 
		\begin{equation*}
			\int_{\Theta}\|\chi_{w}\pi_{F(w)}f\|^{2}d\mu(w)\leq \frac{1}{A}\|f\|^{2}.
		\end{equation*}
		\item[(2)] $\{v(w)\pi_{F(w)}\chi_{w}^{\ast}\chi_{w}\pi_{F(w)}\}_{w\in\Theta}$ is a continuous resolution of the identity operator on $U$. 
	\end{itemize}
\end{theorem}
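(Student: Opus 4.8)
The plan is to convert hypothesis (2) into a single scalar identity and then run a Cauchy--Schwarz argument in the spirit of Theorem \ref{th1}. First I would test the resolution of the identity against $f$: since $\{v(w)\pi_{F(w)}\chi_{w}^{\ast}\chi_{w}\pi_{F(w)}\}_{w\in\Theta}$ resolves the identity, for every $f\in U$ one has $f=\int_{\Theta}v(w)\pi_{F(w)}\chi_{w}^{\ast}\chi_{w}\pi_{F(w)}f\,d\mu(w)$, and pairing this (weakly) with $f$ together with $\pi_{F(w)}^{\ast}=\pi_{F(w)}=\pi_{F(w)}^{2}$ gives
\begin{equation*}
\|f\|^{2}=\int_{\Theta}v(w)\|\chi_{w}\pi_{F(w)}f\|^{2}\,d\mu(w).
\end{equation*}
This identity is the pivot of the whole argument, because it is the one place where the weight enters linearly rather than quadratically.

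For the lower frame bound I would write the integrand as the product $v(w)\|\chi_{w}\pi_{F(w)}f\|^{2}=\bigl(v(w)\|\chi_{w}\pi_{F(w)}f\|\bigr)\bigl(\|\chi_{w}\pi_{F(w)}f\|\bigr)$ and apply the Cauchy--Schwarz inequality in $L^{2}(\Theta,\mu)$, obtaining
\begin{equation*}
\|f\|^{2}\leq\Bigl(\int_{\Theta}v^{2}(w)\|\chi_{w}\pi_{F(w)}f\|^{2}\,d\mu(w)\Bigr)^{1/2}\Bigl(\int_{\Theta}\|\chi_{w}\pi_{F(w)}f\|^{2}\,d\mu(w)\Bigr)^{1/2}.
\end{equation*}
Hypothesis (1) bounds the second factor by $A^{-1/2}\|f\|$, so after dividing by $\|f\|$ (the case $f=0$ being trivial) and squaring I obtain $A\|f\|^{2}\leq\int_{\Theta}v^{2}(w)\|\chi_{w}\pi_{F(w)}f\|^{2}\,d\mu(w)$, which is exactly the lower bound with frame constant $A$.

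The main obstacle is the upper (Bessel) bound, and here the hypotheses as literally stated leave a genuine gap: condition (1) controls the \emph{unweighted} integral $\int_{\Theta}\|\chi_{w}\pi_{F(w)}f\|^{2}\,d\mu(w)$, whereas the frame definition demands control of the \emph{quadratically weighted} integral $\int_{\Theta}v^{2}(w)\|\chi_{w}\pi_{F(w)}f\|^{2}\,d\mu(w)$, and nothing in (1)--(2) prevents $v$ from being unbounded (one can build a discrete example with $\sum|c_{w}|^{2}<\infty$ and $\sum v(w)|c_{w}|^{2}=1$ but $\sum v(w)^{2}|c_{w}|^{2}=\infty$, so that Bessel fails while both hypotheses hold). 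The clean way to close this is to invoke essential boundedness of the weight: if $v\leq M$ a.e.\ then $v^{2}\leq Mv$, and the pivot identity immediately yields $\int_{\Theta}v^{2}(w)\|\chi_{w}\pi_{F(w)}f\|^{2}\,d\mu(w)\leq M\|f\|^{2}$, giving the Bessel bound $B=M$. Hence I would either add $\operatorname{ess\,sup}v<\infty$ as a hypothesis (the most natural repair) or read condition (1) with the weight $v^{2}$ in place of $1$, in which case it supplies the upper bound directly. Measurability of $w\mapsto\pi_{F(w)}f$ is inherited from the standing assumptions on the data $\{F(w),\chi_{w},v(w)\}$, so once the upper bound is secured the pair $(A,B)$ exhibits the family as a continuous $g$-fusion frame.
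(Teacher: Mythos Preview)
Your lower-bound argument is exactly the paper's: both pair the resolution-of-identity integral against $f$ to obtain $\|f\|^{2}=\int_{\Theta}v(w)\|\chi_{w}\pi_{F(w)}f\|^{2}\,d\mu(w)$, then split via Cauchy--Schwarz into the $v^{2}$-weighted and the unweighted quadratic integrals, the latter being controlled by condition (1). Your diagnosis of the upper bound is also on target: the statement as written does not force the Bessel inequality, and the paper closes the gap in precisely the way you anticipated---it silently sets $B=\sup_{w\in\Theta}v^{2}(w)$ and estimates $\int_{\Theta}v^{2}(w)\|\chi_{w}\pi_{F(w)}f\|^{2}\,d\mu(w)\leq B\int_{\Theta}\|\chi_{w}\pi_{F(w)}f\|^{2}\,d\mu(w)\leq (B/A)\|f\|^{2}$, i.e.\ it passes through condition (1) rather than through the pivot identity. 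Your route via $v^{2}\leq Mv$ and the pivot identity is a minor variation that yields the sharper Bessel constant $M=\operatorname{ess\,sup}v$ in place of the paper's $M^{2}/A$, but both arguments stand or fall on the same unstated hypothesis $\operatorname{ess\,sup}v<\infty$, and your counterexample sketch confirms that this hypothesis is genuinely needed.
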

\begin{proof}
	We have $\{v(w)\pi_{F(w)}\chi_{w}^{\ast}\chi_{w}\pi_{F(w)}\}_{w\in\Theta}$ is a continuous resolution of the identity operator on $U$, then  for each $f\in U$ 
	\begin{equation*}
		f=\int_{\Theta}v(w)\pi_{F(w)}\chi_{w}^{\ast}\chi_{w}\pi_{F(w)}fd\mu(w).
	\end{equation*}
	By Cauchy-Schwartz inequality, we get 
	\begin{align*}
		\|f\|^{4}&=\langle f,f\rangle ^{2}=\langle \int_{\Theta}v(w)\pi_{F(w)}\chi_{w}^{\ast}\chi_{w}\pi_{F(w)}fd\mu(w),f\rangle^{2}\\&=\biggl(\int_{\Theta}v(w)\langle \chi_{w}\pi_{F(w)}f,\chi_{w}\pi_{F(w)}f\rangle d\mu(w)\biggr)^{2}\\&\leq \int_{\Theta}v^{2}(w)\|\chi_{w}\pi_{F(w)}f\|^{2}d\mu(w)\int_{\Theta}\|\chi_{w}\pi_{F(w)}f\|^{2}d\mu(w)\\&\leq \frac{1}{A}\|f\|^{2}\int_{\Theta}v^{2}(w)\|\chi_{w}\pi_{F(w)}f\|^{2}d\mu(w),
	\end{align*}
	then 
	\begin{equation*}
		A\|f\|^{2}\leq \int_{\Theta}v^{2}(w)\|\chi_{w}\pi_{F(w)}f\|^{2}d\mu(w),\quad\forall f\in U.
	\end{equation*}
	On the other hand 
	\begin{align*}
		\int_{\Theta}v^{2}(w)\|\chi_{w}\pi_{F(w)}f\|^{2}d\mu(w)&\leq B\int_{\Theta}\|\chi_{w}\pi_{F(w)}f\|^{2}d\mu(w)\\&\leq \frac{B}{A}\|f\|^{2},
	\end{align*}
	where $B=\sup_{w\in\Theta}v^{2}(w)$ and hence, $\{F(w),\chi_{w},v(w)\}_{w\in\Theta}$ is a continuous $g-$fusion frame.
\end{proof}
\section{Continuous $g-$atomic subspace}

\begin{definition}\label{def1}
	$\{F(w),\chi_{w},v(w)\}_{w\in\Theta}$ is said to be continuous $g-$atomic subspace of $U$ with respect to $K$ if 
	\begin{itemize}
		\item[(1)] $\{F(w),\chi_{w},v(w)\}_{w\in\Theta}$ is a continuous $g-$fusion bessel sequence in $U$.
		\item[(2)] Forall $f\in U$, there exists $\phi \in L^{2}(\Theta, U_{0})$ such that 
		\begin{equation*}
			Kf=\int_{\Theta}v(w)\pi_{F(w)}\chi_{w}^{\ast}\phi(w)d\mu(w),\quad and \quad \|\phi\|_{L^{2}(\Theta, U_{0})}\leq C\|f\|_{U},\quad for\quad some \quad C>0.
		\end{equation*}
	\end{itemize}
\end{definition}
\begin{theorem}\label{lem2}
	$\{F(w),\chi_{w},v(w)\}_{w\in\Theta}$ is a continuous $g-$atomic subspace of $U$ with respect to $K$ if and only if $\{F(w),\chi_{w},v(w)\}_{w\in\Theta}$ is a continuous $K-g-$fusion frame for $U$.
\end{theorem}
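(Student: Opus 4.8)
The plan is to interpret both notions through the synthesis operator $T\colon L^{2}(\Theta,U_{0})\to U$, $T(\phi)=\int_{\Theta}v(w)\pi_{F(w)}\chi_{w}^{\ast}\phi(w)\,d\mu(w)$, and its adjoint $T^{\ast}(f)(w)=v(w)\chi_{w}\pi_{F(w)}f$. Two structural identities drive everything: first, $\int_{\Theta}v^{2}(w)\|\chi_{w}\pi_{F(w)}f\|^{2}\,d\mu(w)=\|T^{\ast}f\|_{L^{2}(\Theta,U_{0})}^{2}$, which is immediate from the formula for $T^{\ast}$; and second, $S_{\chi}=TT^{\ast}$, where $S_{\chi}$ is the continuous $K$-$g$-fusion frame operator. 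Because of the first identity, the Bessel/upper-bound requirement is \emph{verbatim} the same in the two definitions (condition (1) of Definition~\ref{def1} is the right-hand inequality of \eqref{eq1}), so I may assume throughout that $\{F(w),\chi_{w},v(w)\}_{w\in\Theta}$ is a continuous $g$-fusion Bessel sequence; it then remains only to match condition (2) of the atomic subspace with the lower $K$-frame bound $A\|K^{\ast}f\|^{2}\le\|T^{\ast}f\|^{2}$.

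The key reformulation is that condition (2) of Definition~\ref{def1} is equivalent to the range inclusion $\mathcal{R}(K)\subseteq\mathcal{R}(T)$. Indeed, the requested representation $Kf=T\phi$ says exactly that $Kf\in\mathcal{R}(T)$ for every $f$, i.e. $\mathcal{R}(K)\subseteq\mathcal{R}(T)$; conversely, given this inclusion I would invoke Douglas' factorization (the Hilbert-space form of Theorem~\ref{lem1}) to write $K=TS$ with $S\colon U\to L^{2}(\Theta,U_{0})$ bounded, and then $\phi:=Sf$ supplies the atom with $\|\phi\|\le\|S\|\,\|f\|$, which is precisely the norm control $\|\phi\|\le C\|f\|$ required in (2) with $C=\|S\|$. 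Thus condition (2) $\Longleftrightarrow\mathcal{R}(K)\subseteq\mathcal{R}(T)$.

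Now I close the loop using the equivalence $(1)\Leftrightarrow(2)$ of Theorem~\ref{lem1}: $\mathcal{R}(K)\subseteq\mathcal{R}(T)$ holds iff $KK^{\ast}\le\lambda^{2}TT^{\ast}=\lambda^{2}S_{\chi}$ for some $\lambda>0$, i.e. iff $S_{\chi}\ge A\,KK^{\ast}$ with $A=\lambda^{-2}>0$. By Theorem~\ref{lem3}, a continuous $g$-fusion Bessel sequence is a continuous $K$-$g$-fusion frame precisely when $S_{\chi}\ge A\,KK^{\ast}$ for some $A>0$. Chaining these equivalences under the standing Bessel hypothesis gives condition (2) $\Longleftrightarrow S_{\chi}\ge A\,KK^{\ast}\Longleftrightarrow$ continuous $K$-$g$-fusion frame, which is the asserted equivalence. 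If one prefers to avoid Theorem~\ref{lem3}, the lower bound can be obtained directly: from $Kf=T\phi$ with $\|\phi\|\le C\|f\|$ one gets $|\langle f,K^{\ast}g\rangle|=|\langle T\phi,g\rangle|=|\langle\phi,T^{\ast}g\rangle|\le C\|f\|\,\|T^{\ast}g\|$, whence $\|K^{\ast}g\|\le C\|T^{\ast}g\|$ and the lower bound $A=C^{-2}$ follows.

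The main point requiring care is the use of Douglas' lemma across \emph{different} Hilbert spaces: Theorem~\ref{lem1} is stated for $L,T\in B(U)$, whereas here $T$ maps $L^{2}(\Theta,U_{0})$ into $U$. What saves the argument is that $KK^{\ast}$ and $TT^{\ast}$ are both bounded operators on the common codomain $U$, so the inequality $KK^{\ast}\le\lambda^{2}TT^{\ast}$ and the inclusion $\mathcal{R}(K)\subseteq\mathcal{R}(T)$ are meaningful, and the general operator-range form of Douglas' theorem applies verbatim with the factor $S$ landing in the domain $L^{2}(\Theta,U_{0})$ of $T$. The only other technical care is the justification of $S_{\chi}=TT^{\ast}$ and of the norm identity for $T^{\ast}$, both of which follow by testing against arbitrary $g\in U$ in the weak definitions of $T$ and $S_{\chi}$ and using Cauchy–Schwarz to control the integrals.
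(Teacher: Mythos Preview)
Your proof is correct and aligns with the paper's argument: the converse direction is identical (from $AKK^{\ast}\le T_{\chi}T_{\chi}^{\ast}$ invoke Douglas to factor $K=T_{\chi}L$ and set $\phi=Lf$), and your ``alternative'' forward argument via $|\langle f,K^{\ast}g\rangle|=|\langle\phi,T^{\ast}g\rangle|\le C\|f\|\,\|T^{\ast}g\|$ is exactly the Cauchy--Schwarz/duality computation the paper carries out. Your primary forward route through the chain $\text{condition (2)}\Leftrightarrow\mathcal{R}(K)\subseteq\mathcal{R}(T)\Leftrightarrow KK^{\ast}\le\lambda^{2}S_{\chi}\Leftrightarrow\text{$K$-$g$-fusion frame}$ (the last step via Theorem~\ref{lem3}) is a slightly more operator-theoretic packaging that the paper does not use, but it is equivalent in substance; your explicit remark that Douglas' lemma must be applied in its general two-space form is a point the paper glosses over.
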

\begin{proof}
	Assum that $\{F(w),\chi_{w},v(w)\}_{w\in\Theta}$ is a continuous $g-$atomic subspace of $U$ with respect to $K$. Then $\{F(w),\chi_{w},v(w)\}_{w\in\Theta}$ is a continuous $g-$fusion bessel sequence in $U$, hence there exists $B>0$ such that 
	\begin{equation*}
		\int_{\Theta}v^{2}(w)\|\chi_{w}\pi_{F(w)}f\|^{2}d\mu(w)\leq B\|f\|^{2}, \quad\forall f\in U.
	\end{equation*}
	For any $f\in U$ we get
	\begin{equation*}
		\|K^{\ast}f\|=\sup_{\|g\|=1}\bigg|\langle K^{\ast}f,g\rangle \bigg|=\sup_{\|g\|=1}\bigg|\langle f,Kg\rangle \bigg|.
	\end{equation*}
	By definition \eqref{def1}, for $g\in U$ there exits $\phi\in L^{2}(\Theta,U_{0})$ such that 
	\begin{equation*}
		Kg=\int_{\Theta}v(w)\pi_{F(w)}\chi_{w}^{\ast}\phi(w)d\mu(w) \quad and \quad \|\phi\|_{L^{2}(\Theta, U_{0})}\leq C\|g\|_{U}\quad for \quad some\quad C>0.
	\end{equation*}
	Therefore 
	\begin{align*}
		\|K^{\ast}f\|&=\sup_{\|g\|=1}\bigg|\langle f,\int_{\Theta}v(w)\pi_{F(w)}\chi_{w}^{\ast}\phi(w)d\mu(w)\rangle \bigg|=\sup_{\|g\|=1}\bigg|\int_{\Theta}v(w)\langle \chi_{w}\pi_{F(w)}f,\phi(w)\rangle d\mu(w)\bigg|\\&\leq \sup_{\|g\|=1}\bigg(\int_{\Theta}v^{2}(w)\|\chi_{w}\pi_{F(w)}f\|^{2}d\mu(w)\bigg)^{\frac{1}{2}}\bigg(\int_{\Theta}\phi(w)\|^{2}d\mu(w)\bigg)^{\frac{1}{2}}\\&=\sup_{\|g\|=1}\|\phi(w)\|\bigg(\int_{\Theta}v^{2}(w)\|\chi_{w}\pi_{F(w)}f\|^{2}d\mu(w)\bigg)^{\frac{1}{2}}\\&\leq \sup_{\|g\|=1}\bigg(\int_{\Theta}v^{2}(w)\|\chi_{w}\pi_{F(w)}f\|^{2}d\mu(w)\bigg)^{\frac{1}{2}}C\|g\|\\&=C\bigg(\int_{\Theta}v^{2}(w)\|\chi_{w}\pi_{F(w)}f\|^{2}d\mu(w)\bigg)^{\frac{1}{2}},
	\end{align*}
	so
	\begin{equation*}
		\frac{1}{C^{2}}\|K^{\ast}f\|^{2}\leq \int_{\Theta}v^{2}(w)\|\chi_{w}\pi_{F(w)}f\|^{2}d\mu(w).
	\end{equation*}
	Conversely if $\{F(w),\chi_{w},v(w)\}_{w\in\Theta}$ is a continuous $K-g-$fusion frame for $U$ with bounds $A, B$ and $T_{\chi}$ is the corresponding synthesis operator.
	
	Then obviously $\{F(w),\chi_{w},v(w)\}_{w\in\Theta}$ is a continuous $g-$fusion bessel sequence in $U$. Now, for each $f\in U$ 
	\begin{equation*}
		A\|K^{\ast}f\|^{2}\leq \int_{\Theta}v^{2}(w)\|\chi_{w}\pi_{F(w)}f\|^{2}d\mu(w)=\|T_{\chi}^{\ast}f\|^{2},
	\end{equation*}
	implies $AKK^{\ast}\leq T_{\chi}T_{\chi}^{\ast}$. By theorem \ref{lem1}, exists $L\in B(U,L^{2}(\Theta,U_{0}))$ such that $K=T_{\chi}L$. Define $L(f)=\phi$, for all $f\in U$. Then for each $f\in U$ we get
	\begin{equation*}
		K(f)=T_{\chi}L(f)=T_{\chi}(\phi)=\int_{\Theta}v(w)\pi_{F(w)}\chi_{w}^{\ast}\phi(w)d\mu(w)	
	\end{equation*}
	and 
	\begin{equation*}
		\|\phi\|_{L^{2}(\Theta,U_{0})}=\|L(f)\|_{L^{2}(\Theta,U_{0})}\leq C\|f\|_{U}, \quad for \quad some \quad C>0.
	\end{equation*}
	Therefore, $\{F(w),\chi_{w},v(w)\}_{w\in\Theta}$ is a continuous $g-$atomic subspace of $U$ with respect to $K$.
\end{proof}
\begin{theorem}
	Let $\{F(w),\chi_{w},v(w)\}_{w\in\Theta}$ be a continuous $g-$fusion frame for $U$. Then $\{F(w),\chi_{w},v(w)\}_{w\in\Theta}$ is a continuous $g-$atomic subspace of $U$ with respect to its continuous $g-$fusion frame operator $S_{\chi}$.
\end{theorem}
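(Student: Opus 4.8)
The plan is to reduce the statement to the equivalence already recorded in Theorem~\ref{lem2}, which asserts that a family is a continuous $g$-atomic subspace of $U$ with respect to an operator $K$ if and only if it is a continuous $K$-$g$-fusion frame for $U$. Taking $K=S_{\chi}$, it therefore suffices to prove that the given continuous $g$-fusion frame $\{F(w),\chi_{w},v(w)\}_{w\in\Theta}$ is a continuous $S_{\chi}$-$g$-fusion frame, that is, to produce constants $0<A\leq B<\infty$ with
\begin{equation*}
	A\|S_{\chi}^{\ast}f\|^{2}\leq \int_{\Theta}v^{2}(w)\|\chi_{w}\pi_{F(w)}f\|^{2}d\mu(w)\leq B\|f\|^{2},\qquad \forall f\in U.
\end{equation*}
The upper inequality costs nothing: by hypothesis $\{F(w),\chi_{w},v(w)\}_{w\in\Theta}$ is a continuous $g$-fusion frame with bounds $C,D$, so the Bessel bound $B=D$ is already available.

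For the lower inequality I would invoke the standard operator-theoretic properties of the frame operator $S_{\chi}$. From its defining sesquilinear form together with the frame inequalities one sees that $S_{\chi}$ is a positive, self-adjoint, invertible operator with $CI\leq S_{\chi}\leq DI$; in particular $S_{\chi}^{\ast}=S_{\chi}$ and $\|S_{\chi}\|\leq D$. Hence $\|S_{\chi}f\|\leq \|S_{\chi}\|\,\|f\|\leq D\|f\|$, which rearranges to $\|f\|^{2}\geq D^{-2}\|S_{\chi}f\|^{2}=D^{-2}\|S_{\chi}^{\ast}f\|^{2}$. Feeding this into the lower frame bound gives
\begin{equation*}
	\int_{\Theta}v^{2}(w)\|\chi_{w}\pi_{F(w)}f\|^{2}d\mu(w)\geq C\|f\|^{2}\geq \frac{C}{D^{2}}\|S_{\chi}^{\ast}f\|^{2},
\end{equation*}
so the lower inequality holds with $A=C/D^{2}$. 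Combining the two bounds shows that $\{F(w),\chi_{w},v(w)\}_{w\in\Theta}$ is a continuous $S_{\chi}$-$g$-fusion frame, and Theorem~\ref{lem2} then yields the claim.

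I do not expect a genuine obstacle here; the argument is essentially bookkeeping once Theorem~\ref{lem2} is in place, and the only delicate point is the justification of the facts $S_{\chi}^{\ast}=S_{\chi}$ and $\|S_{\chi}\|\leq D$, which follow from the frame inequalities exactly as in classical frame theory. As an alternative route that avoids computing $A$ by hand, one could apply Theorem~\ref{lem3} with $K=S_{\chi}$: since $S_{\chi}\leq DI$ forces $S_{\chi}^{2}\leq DS_{\chi}$, i.e. $S_{\chi}\geq D^{-1}S_{\chi}S_{\chi}^{\ast}$, that theorem immediately certifies the continuous $S_{\chi}$-$g$-fusion frame property and hence, again via Theorem~\ref{lem2}, the continuous $g$-atomic subspace property.
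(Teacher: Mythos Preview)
Your proposal is correct and follows the same overall strategy as the paper: reduce, via Theorem~\ref{lem2}, to checking that $\{F(w),\chi_{w},v(w)\}_{w\in\Theta}$ is a continuous $S_{\chi}$-$g$-fusion frame, with the upper bound inherited from the Bessel bound. The one point of divergence is how the lower bound is obtained. The paper observes that $\mathcal{R}(T_{\chi})=U=\mathcal{R}(S_{\chi})$ and invokes Douglas' theorem (Theorem~\ref{lem1}) to produce some $\alpha>0$ with $\alpha S_{\chi}S_{\chi}^{\ast}\leq T_{\chi}T_{\chi}^{\ast}$, whence $\alpha\|S_{\chi}^{\ast}f\|^{2}\leq \|T_{\chi}^{\ast}f\|^{2}=\int_{\Theta}v^{2}(w)\|\chi_{w}\pi_{F(w)}f\|^{2}d\mu(w)$. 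You instead use the elementary norm estimate $\|S_{\chi}\|\leq D$ (coming from $S_{\chi}\leq DI$) to get the explicit constant $C/D^{2}$. Your route is slightly more direct and quantitative; the paper's route illustrates how Theorem~\ref{lem1} drives these results but leaves the constant implicit. Your alternative via Theorem~\ref{lem3} is also valid and amounts to the same estimate packaged differently.
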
	
\begin{proof}
	Suppose that  $\{F(w),\chi_{w},v(w)\}_{w\in\Theta}$ is a continuous $g-$fusion frame in $U$, then there exist $A, B>0$ such that 
	\begin{equation*}
		A\|f\|^{2}\leq \int_{\Theta}v^{2}(w)\|\chi_{w}\pi_{F(w)}f\|^{2}d\mu(w)\leq B\|f\|^{2},\quad \forall f\in U.
	\end{equation*}
	We have $\mathcal{R}(T_{\chi})=U=\mathcal{R}(S_{\chi})$ by theorem \ref{lem1}, there exists $\alpha>0$ such that $\alpha S_{\chi}S_{\chi}^{\ast}\leq T_{\chi}T_{\chi}^{\ast}$ hence for each $f\in U$ we get 
	\begin{equation*}
		\alpha\|S_{\chi}^{\ast}f\|^{2}\leq \|T_{\chi}^{\ast}f\|^{2}=\int_{\Theta}v^{2}(w)\|\chi_{w}\pi_{F(w)}f\|^{2}d\mu(w)\leq B\|f\|^{2}.
	\end{equation*}
\end{proof}
\begin{theorem}
	Let $\{F(w),\chi_{w},v(w)\}_{w\in\Theta}$ and $\{F(w),\xi_{w},v(w)\}_{w\in\Theta}$ be two continuous $g-$atomic subspaces of $U$ with respect to $K\in B(U)$ with the corresponding synthesis operators $T_{\chi}$ and $T_{\xi}$, respectively. If $T_{\chi}T_{\xi}^{\ast}=\theta_{H}$($\theta_{U}$ is a null operator on $U$) and $L,G\in B(U)$ such that $L+G$ is invertible operator on $U$ with $K(L+G)=(L+G)K$, then $\{(L+G)F(w),(\chi_{w}+\xi_{w})\pi_{F(w)}(L+G)^{\ast},v(w)\}_{w\in\Theta}$ is a continuous $g-$atomic subspace of $U$ with respect to $K$.
\end{theorem}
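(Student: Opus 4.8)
The plan is to invoke Theorem~\ref{lem2}: since being a continuous $g$-atomic subspace with respect to $K$ is equivalent to being a continuous $K$-$g$-fusion frame, it suffices to verify that the new family, abbreviated $\{W(w),\eta_w,v(w)\}_{w\in\Theta}$ with $W(w)=(L+G)F(w)$ and $\eta_w=(\chi_w+\xi_w)\pi_{F(w)}(L+G)^\ast$, satisfies a two-sided $K$-$g$-fusion frame inequality. First I would note that, $L+G$ being invertible and hence a homeomorphism of $U$, each $W(w)$ is a closed subspace, so $\pi_{W(w)}$ is well defined and $\overline{(L+G)F(w)}=W(w)$. Applying Theorem~\ref{lem4} with $V=F(w)$ and $T=L+G$ then gives $\pi_{F(w)}(L+G)^\ast=\pi_{F(w)}(L+G)^\ast\pi_{W(w)}$, which absorbs the spurious projection and produces the clean identity
\[
\eta_w\pi_{W(w)}f=(\chi_w+\xi_w)\pi_{F(w)}(L+G)^\ast f .
\]

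Next I would set $h=(L+G)^\ast f$ and recognize the middle integral as a norm in $L^2(\Theta,U_0)$. Since $(\chi_w+\xi_w)^\ast=\chi_w^\ast+\xi_w^\ast$, the synthesis operator of the aggregated sequence $\{F(w),\chi_w+\xi_w,v(w)\}_{w\in\Theta}$ is $T_\chi+T_\xi$, its analysis operator is $T_\chi^\ast+T_\xi^\ast$, and therefore
\[
\int_{\Theta}v^2(w)\|\eta_w\pi_{W(w)}f\|^2\,d\mu(w)=\|(T_\chi^\ast+T_\xi^\ast)h\|^2 .
\]
The hypothesis $T_\chi T_\xi^\ast=\theta_U$ (and its adjoint $T_\xi T_\chi^\ast=\theta_U$) kills the cross term, since $\langle T_\chi^\ast h,T_\xi^\ast h\rangle=\langle h,T_\chi T_\xi^\ast h\rangle=0$, so the right-hand side splits as $\|T_\chi^\ast h\|^2+\|T_\xi^\ast h\|^2$.

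Then I would insert the frame inequalities of the two atomic subspaces. By Theorem~\ref{lem2} each of $\{F(w),\chi_w,v(w)\}$ and $\{F(w),\xi_w,v(w)\}$ is a continuous $K$-$g$-fusion frame, so there are constants with $A_1\|K^\ast h\|^2\le\|T_\chi^\ast h\|^2\le B_1\|h\|^2$ and $A_2\|K^\ast h\|^2\le\|T_\xi^\ast h\|^2\le B_2\|h\|^2$; adding them yields
\[
(A_1+A_2)\|K^\ast h\|^2\le\|T_\chi^\ast h\|^2+\|T_\xi^\ast h\|^2\le(B_1+B_2)\|h\|^2 .
\]
For the upper bound I substitute back $h=(L+G)^\ast f$ and estimate $\|h\|\le\|L+G\|\,\|f\|$, giving $B'=(B_1+B_2)\|L+G\|^2$. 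For the lower bound I use the commutation $K(L+G)=(L+G)K$, which on taking adjoints gives $K^\ast(L+G)^\ast=(L+G)^\ast K^\ast$, so $\|K^\ast h\|=\|(L+G)^\ast K^\ast f\|\ge\|(L+G)^{-1}\|^{-1}\|K^\ast f\|$ by invertibility of $(L+G)^\ast$; this produces $A'=(A_1+A_2)\|(L+G)^{-1}\|^{-2}>0$.

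The main obstacle is not any single estimate but the careful handling of the subspaces: one must correctly identify $\overline{(L+G)F(w)}$ with the closed subspace $W(w)$ and use Theorem~\ref{lem4} to absorb $\pi_{W(w)}$, so that $\eta_w$ genuinely acts as the analysis ingredient on $W(w)$. Once that reduction and the orthogonality-driven splitting $\|(T_\chi^\ast+T_\xi^\ast)h\|^2=\|T_\chi^\ast h\|^2+\|T_\xi^\ast h\|^2$ are established, the two frame bounds follow from the individual bounds together with the boundedness and invertibility of $L+G$, and Theorem~\ref{lem2} converts the resulting $K$-$g$-fusion frame back into the desired continuous $g$-atomic subspace.
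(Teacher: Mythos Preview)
Your proposal is correct and follows essentially the same route as the paper: reduce via Theorem~\ref{lem2} to proving a continuous $K$-$g$-fusion frame inequality, use Theorem~\ref{lem4} to absorb $\pi_{(L+G)F(w)}$ into $\pi_{F(w)}(L+G)^\ast$, exploit $T_\chi T_\xi^\ast=\theta_U$ to kill the cross term so that the integral splits as $\|T_\chi^\ast h\|^2+\|T_\xi^\ast h\|^2$, and then combine the individual frame bounds with the boundedness and invertibility of $L+G$ (together with the commutation $K(L+G)=(L+G)K$). The only cosmetic difference is that the paper discards the $\xi$-summand when bounding below and obtains the lower constant $A_1\|(L+G)^{-1}\|^{-2}$, whereas you keep both summands and get the slightly sharper $(A_1+A_2)\|(L+G)^{-1}\|^{-2}$.
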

\begin{proof}
	We have $\{F(w),\chi_{w},v(w)\}_{w\in\Theta}$ and $\{F(w),\xi_{w},v(w)\}_{w\in\Theta}$ are continuous $g-$atomic subspaces with respect to $K$ by theorem \ref{lem2} they are continuous $K-g-$fusion frame for $U$, hence for each $f\in U$ there exist positive constants $(A_{1},B_{1})$ and $(A_{2},B_{2})$ such that 
	\begin{equation*}
		A_{1}\|K^{\ast}f\|^{2}\leq \int_{\Theta}v^{2}(w)\|\chi_{w}\pi_{F(w)}f\|^{2}d\mu(w)\leq B_{1}\|f\|^{2},
	\end{equation*}
	and 
	\begin{equation*}
		A_{2}\|K^{\ast}f\|^{2}\leq \int_{\Theta}v^{2}(w)\|\xi_{w}\pi_{F(w)}f\|^{2}d\mu(w)\leq B_{2}\|f\|^{2}.
	\end{equation*}
	We have $T_{\chi}T_{\xi}^{\ast}=\theta_{U}$, for any $f\in U$ we get 
	
	\begin{equation*}
		T_{\chi}\{v(w)\xi_{w}\pi_{F(w)}f\}_{w\in\Theta}=\int_{\Theta}v^{2}(w)\pi_{F(w)}\chi_{w}^{\ast}\xi_{w}\pi_{F(w)}fd\mu(w).
	\end{equation*}
	Since $L+G$ invertible, then 
	\begin{equation*}
		\|K^{\ast}f\|^{2}=\|((L+G)^{-1})^{\ast}(L+G)^{\ast}K^{\ast}f\|^{2}\leq \|(L+G)^{-1}\|^{2}\|(L+G)^{\ast}K^{\ast}f\|^{2},
	\end{equation*}
	for each $f\in U$ we get
	\begin{align*}
		\int_{\Theta}v^{2}(w)&\|(\chi_{w}+\xi_{w})\pi_{F(w)}(L+G)^{\ast}\pi_{(L+G)F(w)}f\|^{2}d\mu(w)\\&=\int_{\Theta}v^{2}(w)\|(\chi_{w}+\xi_{w})\pi_{F(w)}(L+G)^{\ast}f\|^{2}d\mu(w)\\&=\int_{\Theta}v^{2}(w)\langle (\chi_{w}+\xi_{w})\pi_{F(w)}T^{\ast}f,(\chi_{w}+\xi_{w})\pi_{F(w)}T^{\ast}f\rangle d\mu(w), \quad (T=L+G)\\&=\int_{\Theta}v^{2}(w)\bigg[\|\chi_{w}\pi_{F(w)}T^{\ast}f\|^{2}+\|\xi_{w}\pi_{F(w)}T^{\ast}f\|^{2}+2\mathcal{R}e\langle T\pi_{F(w)}\chi_{w}^{\ast}\xi_{w}\pi_{F(w)}T^{\ast}f,f\rangle \bigg]d\mu(w)\\&=\int_{\Theta}v^{2}(w)\|\chi_{w}\pi_{F(w)}T^{\ast}f\|^{2}d\mu(w)+\int_{\Theta}v^{2}(w)\|\xi_{w}\pi_{F(w)}T^{\ast}f\|^{2}d\mu(w)\\&\leq B_{1}\|T^{\ast}f\|^{2}+B_{2}\|T^{\ast}f\|^{2}\\&\leq (B_{1}+B_{2})\|L+G\|^{2}\|f\|^{2}.
	\end{align*}
	On the other hand, 
	\begin{align*}
		\int_{\Theta}v^{2}(w)\|(\chi_{w}+\xi_{w})\pi_{F(w)}(L+G)^{\ast}\pi_{(L+G)F(w)}f\|^{2}d\mu(w)&\geq \int_{\Theta}v^{2}(w)\|\chi_{w}\pi_{F(w)}(L+G)^{\ast}f\|^{2}d\mu(w)\\&\geq A_{1}\|K^{\ast}(L+G)^{\ast}f\|^{2}\\&=A_{1}\|(L+G)^{\ast}K^{\ast}f\|^{2}\\&\geq A_{1}\|(L+G)^{-1}\|^{-2}\|K^{\ast}f\|^{2}.
	\end{align*}
	So $\{(L+G)F(w),(\chi_{w}+\xi_{w})\pi_{F(w)}(L+G)^{\ast},v(w)\}_{w\in\Theta}$ is a continuous $K-g-$fusion frame and by theorem \ref{lem2}, it is a continuous $g-$atomic subspace of $U$ with respect to $K$.
\end{proof}
\begin{theorem}
	Let $\{F(w),\chi_{w},v(w)\}_{w\in\Theta}$ is a continuous $g-$atomic subspace for $K\in B(U)$ and $S_{\chi}$ be the frame operator of $\chi$. If $L\in B(U)$ is a positive and invertible operator on $U$, then $\{(I_{U}+L)F(w),\chi_{w}\pi_{F(w)}(I_{U}+L)^{\ast},v(w)\}_{w\in\Theta}$ is a continuous $g-$atomic subspace of $U$ with respect to $K$.
\end{theorem}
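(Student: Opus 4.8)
\emph{Plan.} By Theorem \ref{lem2} it suffices to prove that the transformed family $\{(I_U+L)F(w),\chi_w\pi_{F(w)}(I_U+L)^\ast,v(w)\}_{w\in\Theta}$ is a continuous $K$-$g$-fusion frame for $U$; the hypothesis that the original system is a continuous $g$-atomic subspace for $K$ gives, again through Theorem \ref{lem2}, that $\{F(w),\chi_w,v(w)\}_{w\in\Theta}$ is itself a continuous $K$-$g$-fusion frame, so there are constants $A,B>0$ with $A\|K^\ast f\|^2\le\int_\Theta v^2(w)\|\chi_w\pi_{F(w)}f\|^2 d\mu(w)\le B\|f\|^2$ for all $f\in U$. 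Writing $T:=I_U+L$, I would first record the structural facts to be used: since $L$ is positive it is self-adjoint, so $T^\ast=T$ is self-adjoint, positive, and, being the sum of $I_U$ with a positive operator, satisfies $T\ge I_U$; and since $L$ is invertible, $T$ is invertible and bounded below, whence each $TF(w)$ is a \emph{closed} subspace, i.e. $\overline{TF(w)}=TF(w)$.

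The first substantive step is to compute the frame sum of the new family and strip off the projection onto $TF(w)$. Applying Theorem \ref{lem4} with $V=F(w)$ to the operator $T$ yields $\pi_{F(w)}T^\ast=\pi_{F(w)}T^\ast\pi_{\overline{TF(w)}}=\pi_{F(w)}T^\ast\pi_{TF(w)}$, so that $\chi_w\pi_{F(w)}T^\ast\pi_{TF(w)}f=\chi_w\pi_{F(w)}T^\ast f$ and hence $\int_\Theta v^2(w)\|\chi_w\pi_{F(w)}(I_U+L)^\ast\pi_{TF(w)}f\|^2 d\mu(w)=\int_\Theta v^2(w)\|\chi_w\pi_{F(w)}T^\ast f\|^2 d\mu(w)$. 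This reduces the whole estimate to testing the \emph{original} frame sum at the vector $T^\ast f=Tf$. In operator language the same computation says the new analysis operator is $T_\chi^\ast T$ and the new frame operator is $\tilde S=T S_\chi T$.

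For the Bessel (upper) bound I would apply the original upper bound at $Tf$: the displayed identity together with $\int_\Theta v^2(w)\|\chi_w\pi_{F(w)}Tf\|^2 d\mu(w)\le B\|Tf\|^2\le B\|I_U+L\|^2\|f\|^2$ gives a finite upper bound, so the new family is a continuous $g$-fusion Bessel sequence. Moreover, using $S_\chi\ge AKK^\ast$ from Theorem \ref{lem3} one obtains the lower operator estimate $\tilde S=TS_\chi T\ge A\,TKK^\ast T=A\,(TK)(TK)^\ast$.

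The lower frame bound is where the real work lies, and I expect it to be the main obstacle. The reduction above leaves me needing $A'>0$ with $\int_\Theta v^2(w)\|\chi_w\pi_{F(w)}T^\ast f\|^2 d\mu(w)\ge A'\|K^\ast f\|^2$, whereas the original lower bound only delivers $A\|K^\ast T^\ast f\|^2$ on the left; everything therefore hinges on comparing $\|K^\ast T^\ast f\|$ with $\|K^\ast f\|$. In operator form this is exactly the passage from $(TK)(TK)^\ast$ to $KK^\ast$, which by Theorem \ref{lem1} requires the range inclusion $\mathcal R(K)\subseteq\mathcal R(TK)$; since $T$ is invertible this says $\overline{\mathcal R(K)}$ is $T$-invariant. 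The clean way to secure it is the commutation relation $K(I_U+L)=(I_U+L)K$ (equivalently $KL=LK$), exactly as in the preceding theorem: then $\mathcal R(TK)=\mathcal R(KT)=K\,\mathcal R(T)=KU=\mathcal R(K)$, Theorem \ref{lem1} produces $\lambda>0$ with $KK^\ast\le\lambda^2(TK)(TK)^\ast$, and substituting into $\tilde S\ge A(TK)(TK)^\ast$ gives $\tilde S\ge(A/\lambda^2)KK^\ast$. In the direct computation this is the step $A\|K^\ast T^\ast f\|^2=A\|T^\ast K^\ast f\|^2\ge A\|T^{-1}\|^{-2}\|K^\ast f\|^2$, where commutativity is used to interchange $K^\ast$ and $T^\ast$ and then $\|T^\ast g\|\ge\|T^{-1}\|^{-1}\|g\|$ is applied to $g=K^\ast f$. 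With the lower bound in hand, the new family is a continuous $K$-$g$-fusion frame, and Theorem \ref{lem2} concludes that it is a continuous $g$-atomic subspace of $U$ with respect to $K$.
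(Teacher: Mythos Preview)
Your approach is exactly the paper's: invoke Theorem~\ref{lem2} to reduce to showing the transformed family is a continuous $K$-$g$-fusion frame, use Theorem~\ref{lem4} to strip the projection $\pi_{TF(w)}$ (so the frame sum becomes $\int_\Theta v^2(w)\|\chi_w\pi_{F(w)}T^\ast f\|^2\,d\mu(w)$), verify the Bessel bound via the original upper bound applied to $T^\ast f$, and compute the new frame operator as $(I_U+L)S_\chi(I_U+L)^\ast$. The paper performs precisely these steps and then stops: its argument ends with the identification of the frame operator and never establishes the lower inequality $\tilde S\ge cKK^\ast$ (equivalently, the lower $K$-frame bound). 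So the paper's proof is incomplete at exactly the point you flag.

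Your diagnosis of the obstruction is not just correct but decisive: the theorem as stated is false without the commutation hypothesis $KL=LK$ that you propose to import from the preceding theorem. A two-dimensional counterexample suffices. Take $U=\mathbb{R}^2$, $\Theta$ a single atom with $v=1$, $F=\operatorname{span}\{e_1\}$, $\chi\colon F\to\mathbb{R}$ the first coordinate, and $K=\pi_F$; then $S_\chi=K=KK^\ast$ and the family is a Parseval continuous $K$-$g$-fusion frame, hence a continuous $g$-atomic subspace for $K$. With the positive invertible operator $L=\left(\begin{smallmatrix}1&1\\1&2\end{smallmatrix}\right)$ one finds $\chi\,\pi_F(I_U+L)^\ast f=2f_1+f_2$, which vanishes at $f=(1,-2)$ while $\|K^\ast f\|=1$; hence no lower $K$-bound can hold for the transformed family. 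Under the extra hypothesis $KT=TK$ your argument goes through verbatim: $A\|K^\ast T^\ast f\|^2=A\|T^\ast K^\ast f\|^2\ge A\|T^{-1}\|^{-2}\|K^\ast f\|^2$ supplies the missing lower bound, and Theorem~\ref{lem2} finishes.
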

Because $\{F(w),\chi_{w},v(w)\}_{w\in\Theta}$ is a continuous $g-$atomic subspace with respect to $K$, by theorem \ref{lem2} it is a continuous $K-g-$fusion frame for $U$ then by theorem \ref{lem3}, there exists $A>0$ such that $S_{\Lambda}\geq AKK^{\ast}$. For each $f\in U$ we get 
\begin{align*}
	\int_{\Theta}v^{2}(w)\|\chi_{w}\pi_{F(w)}(I_{U}+L)^{\ast}\pi_{(L+G)F(w)}f\|^{2}d\mu(w)&=\int_{\Theta}v^{2}(w)\|\chi_{w}\pi_{F(w)}(I_{U}+L)^{\ast}f\|^{2}d\mu(w)\\&\leq B\|(I_{U}+L)^{\ast}f\|^{2}\\&\leq B\|I_{U}+L\|^{2}\|f\|^{2}
\end{align*}
So, $\{(I_{U}+L)F(w),\chi_{w}\pi_{F(w)}(I_{U}+L)^{\ast},v(w)\}_{w\in\Theta}$ is a continuous $g-$fusion bessel sequence in $U$. Also, for all $f\in U$ we get 
\begin{align*}
	\int_{\Theta}&v^{2}(w)\pi_{(I_{U}+L)F(w)}(\chi_{w}\pi_{F(w)}(I_{U}+L)^{\ast})^{\ast}\chi_{w}\pi_{F(w)}(I_{U}+L)^{\ast}\pi_{(I_{U}+L)F(w)}fd\mu(w)\\&=\int_{\Theta}v^{2}(w)\pi_{(I_{U}+L)F(w)}(I_{U}+L)\pi_{F(w)}\chi_{w}^{\ast}\chi_{w}\pi_{F(w)}(I_{U}+L)^{\ast}\pi_{(I_{U}+L)F(w)}fd\mu(w)\\&=\int_{\Theta}v^{2}(w)(\pi_{F(w)}(I_{U}+L)^{\ast}\pi_{(I_{U}+L)F(w)})^{\ast}\chi_{w}^{\ast}\chi_{w}\pi_{F(w)}(I_{U}+L)^{\ast}\pi_{(I_{U}+L)F(w)}fd\mu(w)\\&=\int_{\Theta}v^{2}(w)(\pi_{F(w)}(I_{U}+L)^{\ast})^{\ast}\chi_{w}^{\ast}\chi_{w}\pi_{F(w)}(I_{U}+L)^{\ast}fd\mu(w)\\&=\int_{\Theta}v^{2}(w)(I_{U}+L)\pi_{F(w)}\chi_{w}^{\ast}\chi_{w}\pi_{F(w)}(I_{U}+L)^{\ast}fd\mu(w)\\&=(I_{U}+L)S_{\chi}(I_{U}+L)^{\ast}f.
\end{align*}
This implies the frame operator of $\{(I_{U}+L)F(w),\chi_{w}\pi_{F(w)}(I_{U}+L)^{\ast},v(w)\}_{w\in\Theta}$ is $(I_{U}+L)S_{\chi}(I_{U}+L)^{\ast}$.

\section{Frame operator for a pair of continuous $g-$fusion bessel sequences}

\begin{definition}
	Let $\chi=\{F(w),\chi_{w},v(w)\}_{w\in\Theta}$ and $\xi=\{G(w),\xi_{w},s(w)\}_{w\in\Theta}$ be two continuous $g-$bessel sequences in $U$ with bounds $D_{1}$ and $D_{2}$. Then the operator $S_{\xi\chi}:U\rightarrow U$ defined by 
	\begin{equation*}
		S_{\xi\chi}f=\int_{\Theta}v(w)s(w)\pi_{G(w)}\xi_{w}^{\ast}\chi_{w}\pi_{F(w)}fd\mu(w).
	\end{equation*}
\end{definition}
is called the frame operator for the pair of continuous $g-$bessel sequences $\chi$ and $\xi$.
\begin{theorem}
	The frame operator $S_{\xi\chi}$ for the pair of continuous $g-$fusion bessel sequences $\chi$ and $\xi$ is bounded and $S_{\xi\chi}^{\ast}=S_{\chi\xi}$.
\end{theorem}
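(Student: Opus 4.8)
The plan is to work entirely from the sesquilinear form associated with $S_{\xi\chi}$, since the defining integral is naturally read off against a test vector. First I would observe that the operator is really specified weakly through
\begin{equation*}
	\langle S_{\xi\chi}f,g\rangle=\int_{\Theta}v(w)s(w)\langle \pi_{G(w)}\xi_{w}^{\ast}\chi_{w}\pi_{F(w)}f,g\rangle d\mu(w),
\end{equation*}
and the first step is the algebraic rewriting of the integrand. Using that $\pi_{G(w)}$ is a self-adjoint projection and that $\xi_{w}^{\ast}$ is the adjoint of $\xi_{w}$, one moves the factors onto $g$ to obtain $\langle \pi_{G(w)}\xi_{w}^{\ast}\chi_{w}\pi_{F(w)}f,g\rangle=\langle \chi_{w}\pi_{F(w)}f,\xi_{w}\pi_{G(w)}g\rangle$, an inner product in $U_{w}$. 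This single identity is what drives both assertions.

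For boundedness I would estimate the rewritten form by the Cauchy--Schwarz inequality, first pointwise in $w$ and then for the integral over $\Theta$, giving
\begin{equation*}
	|\langle S_{\xi\chi}f,g\rangle|\leq \Bigl(\int_{\Theta}v^{2}(w)\|\chi_{w}\pi_{F(w)}f\|^{2}d\mu(w)\Bigr)^{\frac{1}{2}}\Bigl(\int_{\Theta}s^{2}(w)\|\xi_{w}\pi_{G(w)}g\|^{2}d\mu(w)\Bigr)^{\frac{1}{2}}.
\end{equation*}
Applying the Bessel bound $D_{1}$ for $\chi$ to the first factor and the Bessel bound $D_{2}$ for $\xi$ to the second yields $|\langle S_{\xi\chi}f,g\rangle|\leq \sqrt{D_{1}D_{2}}\,\|f\|\,\|g\|$. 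Taking the supremum over $\|g\|=1$ shows $S_{\xi\chi}$ is bounded with $\|S_{\xi\chi}\|\leq \sqrt{D_{1}D_{2}}$; this estimate also supplies the Riesz-representation justification that $S_{\xi\chi}f$ is a well-defined element of $U$ for each $f$.

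For the adjoint relation I would simply compare the two sesquilinear forms. Writing $S_{\chi\xi}$ with the roles of $\chi$ and $\xi$ (and of $v,s$) interchanged, the same rewriting gives $\langle S_{\chi\xi}g,f\rangle=\int_{\Theta}v(w)s(w)\langle \xi_{w}\pi_{G(w)}g,\chi_{w}\pi_{F(w)}f\rangle d\mu(w)$. Taking the complex conjugate and using conjugate-symmetry of the $U_{w}$ inner product produces exactly $\langle f,S_{\chi\xi}g\rangle=\int_{\Theta}v(w)s(w)\langle \chi_{w}\pi_{F(w)}f,\xi_{w}\pi_{G(w)}g\rangle d\mu(w)$, which is the expression already computed for $\langle S_{\xi\chi}f,g\rangle$. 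Hence $\langle S_{\xi\chi}f,g\rangle=\langle f,S_{\chi\xi}g\rangle$ for all $f,g\in U$, i.e. $S_{\xi\chi}^{\ast}=S_{\chi\xi}$.

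There is no serious obstacle here: the only point requiring care is the bookkeeping of which projection and which adjoint sits on which side after the rewriting, together with the measurability/integrability needed to apply Cauchy--Schwarz to the scalar integral (guaranteed by the two Bessel conditions). Once the identity $\langle \pi_{G(w)}\xi_{w}^{\ast}\chi_{w}\pi_{F(w)}f,g\rangle=\langle \chi_{w}\pi_{F(w)}f,\xi_{w}\pi_{G(w)}g\rangle$ is in place, both the norm bound and the adjoint formula follow from the symmetry of this expression under exchanging $(\chi,F,v)\leftrightarrow(\xi,G,s)$ and conjugation.
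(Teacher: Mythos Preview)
Your proposal is correct and follows essentially the same route as the paper: rewrite $\langle S_{\xi\chi}f,g\rangle$ as $\int_{\Theta}v(w)s(w)\langle \chi_{w}\pi_{F(w)}f,\xi_{w}\pi_{G(w)}g\rangle\,d\mu(w)$, apply Cauchy--Schwarz together with the two Bessel bounds to get $\|S_{\xi\chi}\|\leq\sqrt{D_{1}D_{2}}$, and then read off $S_{\xi\chi}^{\ast}=S_{\chi\xi}$ from the symmetry of that bilinear expression. The only difference is cosmetic: the paper moves the operators directly onto $g$ to identify $\langle f,S_{\chi\xi}g\rangle$, whereas you phrase the same identity via conjugate-symmetry of the $U_{w}$ inner product.
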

\begin{proof}
	For each $f,g\in U$ we have 
	\begin{align*}
		\langle S_{\xi\chi}f,g\rangle&=\langle \int_{\Theta}v(w)s(w)\pi_{G(w)}\xi_{w}^{\ast}\chi_{w}\pi_{F(w)}fd\mu(w),g\rangle \\&=\int_{\Theta}v(w)s(w)\langle \chi_{w}\pi_{F(w)}f,\xi_{w}\pi_{G(w)}g\rangle d\mu(w).
	\end{align*}
	By the Cauchy-Schwartz inequality, we have
	\begin{align*}
		\bigg|\langle S_{\xi\chi}f,g\rangle \bigg|&\leq \bigg(\int_{\Theta}s^{2}(w)\|\xi_{w}\pi_{G(w)}g\|^{2}d\mu(w)\bigg)^{\frac{1}{2}}\bigg(\int_{\Theta}v^{2}(w)\|\chi_{w}\pi_{F(w)}f\|^{2}d\mu(w)\bigg)^{\frac{1}{2}}\\&\leq \sqrt{D_{1}}\|g\|\sqrt{D_{2}}\|f\|,
	\end{align*}
	this implies that $S_{\xi\chi}$ is a bounded operator with $\|S_{\xi\chi}\|\leq \sqrt{D_{1}D_{2}}$. Now 
	\begin{align*}
		\|S_{\xi\chi}f\|&=\sup_{\|g\|=1}\bigg|\langle S_{\xi\chi}f,g\rangle \bigg|\\&\leq \sup_{\|g\|=1}\sqrt{D_{2}}\|g\|\bigg(\int_{\Theta}v^{2}(w)\|\chi_{w}\pi_{F(w)}f\|^{2}d\mu(w)\bigg)^{\frac{1}{2}}\\&\leq \sqrt{D_{2}}\bigg(\int_{\Theta}v^{2}(w)\|\chi_{w}\pi_{F(w)}f\|^{2}d\mu(w)\bigg)^{\frac{1}{2}}
	\end{align*}
	and similarly, it can be schown that 
	\begin{equation*}
		\|S_{\xi\chi}^{\ast}g\|\leq \sqrt{D_{1}}\bigg(\int_{\Theta}v^{2}(w)\|\xi_{w}\pi_{G(w)}g\|^{2}d\mu(w)\bigg)^{\frac{1}{2}}.
	\end{equation*}
\end{proof}
Moreover, for each $f,g\in U$ we have 
\begin{align*}
	\langle S_{\xi\chi}f,g\rangle&=\langle \int_{\Theta}v(w)s(w)\pi_{G(w)}\xi_{w}^{\ast}\chi_{w}\pi_{F(w)}fd\mu(w),g\rangle\\&=\int_{\Theta}v(w)s(w)\langle f,\pi_{F(w)}\chi_{w}^{\ast}\xi_{w}\pi_{G(w)}g\rangle d\mu(w)\\&=\langle f,\int_{\Theta}v(w)s(w)\pi_{F(w)}\chi_{w}^{\ast}\xi_{w}\pi_{G(w)}gd\mu(w)\rangle \\&=\langle f,S_{\chi\xi}g\rangle.
\end{align*}
So, $S_{\xi\chi}^{\ast}=S_{\chi\xi}$.
\begin{theorem}
	Let $S_{\xi\chi}$ be the frame operator for a pair of continuous $g-$fusion bessel sequences $\chi$ and $\xi$ with bounds $D_{1}$ and $D_{2}$, respectively. Then the following statements are equivalent:
	\begin{itemize}
		\item[(1)] is bounded below.
		\item[(2)] There exists $K\in B(U)$ such that $\{T_{w}\}_{w\in\Theta}$ is a continuous resolution of the identity operator on $U$, where $T_{w}=v(w)s(w)K\pi_{G(w)}\xi_{w}^{\ast}\chi_{w}\pi_{F(w)}$, $w\in \Theta$.
	\end{itemize}
	If one of the given conditions holds, then $\chi$ is a continuous $g-$fusion frame.
\end{theorem}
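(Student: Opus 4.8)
The plan is to recognize that condition (2) is merely a disguised statement of left-invertibility of $S_{\xi\chi}$, after which the equivalence with (1) becomes the classical Hilbert-space fact that an operator is bounded below if and only if it admits a bounded left inverse. First I would unwind the meaning of the continuous resolution of the identity: $\{T_{w}\}_{w\in\Theta}$ resolves the identity precisely when $f=\int_{\Theta}T_{w}f\,d\mu(w)$ for every $f\in U$. Since $K\in B(U)$ is bounded and hence may be pulled through the (weakly defined) integral, I would compute
\begin{equation*}
	\int_{\Theta}T_{w}f\,d\mu(w)=K\int_{\Theta}v(w)s(w)\pi_{G(w)}\xi_{w}^{\ast}\chi_{w}\pi_{F(w)}f\,d\mu(w)=K S_{\xi\chi}f .
\end{equation*}
Thus condition (2) holds if and only if there is a $K\in B(U)$ with $K S_{\xi\chi}=I_{U}$, that is, $S_{\xi\chi}$ is left-invertible.

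For the implication (1) $\Rightarrow$ (2), suppose $\|S_{\xi\chi}f\|\geq c\|f\|$ for some $c>0$ and all $f\in U$. Then $\langle S_{\xi\chi}^{\ast}S_{\xi\chi}f,f\rangle=\|S_{\xi\chi}f\|^{2}\geq c^{2}\|f\|^{2}$, so $S_{\xi\chi}^{\ast}S_{\xi\chi}\geq c^{2}I_{U}$ is positive and invertible; setting $K:=(S_{\xi\chi}^{\ast}S_{\xi\chi})^{-1}S_{\xi\chi}^{\ast}\in B(U)$ yields $K S_{\xi\chi}=I_{U}$, which by the previous paragraph produces exactly the required resolution of the identity. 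For (2) $\Rightarrow$ (1), if $K S_{\xi\chi}=I_{U}$ then for all $f\in U$ we have $\|f\|=\|K S_{\xi\chi}f\|\leq\|K\|\,\|S_{\xi\chi}f\|$, so $S_{\xi\chi}$ is bounded below with constant $\|K\|^{-1}$.

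Finally, to conclude that $\chi$ is a continuous $g$-fusion frame whenever either condition holds, I would use that $\chi$ is already Bessel (supplying the upper bound) and extract a lower bound from the bounded-belowness of $S_{\xi\chi}$. Writing $\|S_{\xi\chi}f\|=\sup_{\|g\|=1}|\langle S_{\xi\chi}f,g\rangle|$ and expanding the inner product as $\int_{\Theta}v(w)s(w)\langle\chi_{w}\pi_{F(w)}f,\xi_{w}\pi_{G(w)}g\rangle\,d\mu(w)$, the Cauchy--Schwarz inequality together with the Bessel bound $D_{1}$ of $\xi$ gives
\begin{equation*}
	c\|f\|\leq\|S_{\xi\chi}f\|\leq\sqrt{D_{1}}\Bigl(\int_{\Theta}v^{2}(w)\|\chi_{w}\pi_{F(w)}f\|^{2}\,d\mu(w)\Bigr)^{\frac{1}{2}},
\end{equation*}
whence $\tfrac{c^{2}}{D_{1}}\|f\|^{2}\leq\int_{\Theta}v^{2}(w)\|\chi_{w}\pi_{F(w)}f\|^{2}\,d\mu(w)$, which is precisely the lower frame inequality.

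I expect the main obstacle to be the correct reformulation of condition (2): recognizing that the continuous resolution of the identity with the specific kernel $T_{w}$ collapses to the single operator identity $K S_{\xi\chi}=I_{U}$, and justifying that $K$ may legitimately be pulled through the integral in the weak sense. Once this reduction is in place, the equivalence is the standard \emph{bounded below $\Leftrightarrow$ left-invertible} dichotomy, and the frame conclusion reduces to a routine Cauchy--Schwarz estimate against the Bessel bound of $\xi$.
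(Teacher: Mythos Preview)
Your proposal is correct and follows essentially the same route as the paper: both recognize that condition~(2) collapses to the operator identity $KS_{\xi\chi}=I_{U}$, establish the equivalence of this with bounded-belowness, and then derive the lower frame bound for $\chi$ via Cauchy--Schwarz against the Bessel bound of $\xi$. The only noteworthy difference is that the paper invokes Douglas' factorization theorem (Theorem~\ref{lem1}) for the equivalence, whereas you give the more elementary direct arguments (explicitly building $K=(S_{\xi\chi}^{\ast}S_{\xi\chi})^{-1}S_{\xi\chi}^{\ast}$ for one direction and the norm estimate $\|f\|\leq\|K\|\,\|S_{\xi\chi}f\|$ for the other); also note a labeling slip: in the paper's convention $\chi$ carries Bessel bound $D_{1}$ and $\xi$ carries $D_{2}$, so your final inequality should read $c\|f\|\leq\|S_{\xi\chi}f\|\leq\sqrt{D_{2}}\bigl(\int_{\Theta}v^{2}(w)\|\chi_{w}\pi_{F(w)}f\|^{2}\,d\mu(w)\bigr)^{1/2}$.
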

\begin{proof}
	$(1)\implies (2)$	Assum that $S_{\xi\chi}$ is bounded below. Then for each $f\in U$ there exists $A>0$ such that 
	\begin{equation*}
		\|f\|^{2}\leq A\|S_{\xi\chi}f\|^{2},
	\end{equation*}
	so
	\begin{equation*}
		\langle I_{U}f,f\rangle \leq A\langle S_{\chi\xi}^{\ast}S_{\chi\xi}f,f\rangle, 
	\end{equation*}
	hence 
	\begin{equation*}
		I_{U}^{\ast}I_{U}\leq AS_{\xi\chi}^{\ast}S_{\xi\chi}.
	\end{equation*}
	By theorem \ref{lem1}, there exists $K\in B(U)$ such that $KS_{\xi\chi}=I_{U}$. 
	
	Then for each $f\in U$ we get
	\begin{align*}
		f&=KS_{\xi\chi}f\\&=K\int_{\Theta}v^{2}(w)s^{2}(w)\pi_{G(w)}\xi_{w}^{\ast}\chi_{w}\pi_{F(w)}fd\mu(w)\\&=\int_{\Theta}v^{2}(w)s^{2}(w)K\pi_{G(w)}\xi_{w}^{\ast}\chi_{w}\pi_{F(w)}fd\mu(w)\\&=\int_{\Theta}T_{w}fd\mu(w).
	\end{align*}
	$(2)\implies (1)$ Because $\{T_{w}\}_{w\in\Theta }$ is a continuous resolution of the identity operator on $U$ for any $f\in U$ we get 
	\begin{align*}
		f&=\int_{\Theta}T_{w}(f)d\mu(w)\\&=\int_{\Theta}v(w)s(w)K\pi_{G(w)}\xi^{\ast}\chi_{w}\pi_{F(w)}fd\mu(w)\\&=K\int_{\Theta}v(w)s(w)\pi_{G(w)}\xi_{w}^{\ast}\chi_{w}\pi_{F(w)}fd\mu(w)\\&=KS_{\xi\chi}f.
	\end{align*}
	Then, $I_{U}=KS_{\xi\chi}$. Hence, by theorem \ref{lem1}, there exists $\alpha>0$ such that $I_{U}I_{U}^{\ast}\leq \alpha S_{\xi\chi}S_{\xi\chi}^{\ast}$ so $S_{\xi\chi}$ is bounded below.
	
	Last part: we assum that $S_{\xi\chi}$ is bounded below. So for all $f\in U$ there exists $M>0$ such that $\|S_{\xi\chi}f\|\geq M\|f\|$ then
	\begin{equation*}
		M^{2}\|f\|^{2}\leq \|S_{\xi\chi}f\|^{2}\leq D_{2}\int_{\Theta}v^{2}(w)\|\chi_{w}\pi_{F(w)}f\|^{2}d\mu(w),
	\end{equation*}
	so
	\begin{equation*}
		\frac{M^{2}}{D_{2}}\|f\|^{2}\leq \int_{\Theta}v^{2}(w)\|\chi_{w}\pi_{F(w)}f\|^{2}d\mu(w).
	\end{equation*}
	Hence, $\chi$ is a continuous $g-$fusion frame for $U$ with bounds $\frac{M^{2}}{D_{2}}$ and $D_{1}$.
\end{proof}	
\begin{theorem}
	Let $S_{\xi\chi}$ be the frame operator for a pair of continuous $g-$fusion bessel sequences $\chi$ and $\xi$ with bounds $D_{1}$ and $D_{2}$, respectively. Suppose $\lambda_{1}<1, \lambda_{2}>-1$ such that for each $f\in U$, $\|f-S_{\xi\chi}f\|\leq \lambda_{1}\|f\|+\lambda_{2}\|S_{\xi\chi}f\|$. Then $\chi$ is a continuous $g-$fusion frame for $U$.
\end{theorem}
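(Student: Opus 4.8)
The plan is to show that the perturbation hypothesis forces $S_{\xi\chi}$ to be bounded below, and then to invoke the preceding theorem, which already established that boundedness below of $S_{\xi\chi}$ implies that $\chi$ is a continuous $g-$fusion frame. In other words, I would reduce this statement to condition (1) of the previous theorem.

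First I would apply the reverse triangle inequality $\|f\|-\|S_{\xi\chi}f\|\leq \|f-S_{\xi\chi}f\|$ and combine it with the assumed bound $\|f-S_{\xi\chi}f\|\leq \lambda_{1}\|f\|+\lambda_{2}\|S_{\xi\chi}f\|$. This yields
\begin{equation*}
\|f\|-\|S_{\xi\chi}f\|\leq \lambda_{1}\|f\|+\lambda_{2}\|S_{\xi\chi}f\|,
\end{equation*}
which after regrouping the terms gives
\begin{equation*}
(1-\lambda_{1})\|f\|\leq (1+\lambda_{2})\|S_{\xi\chi}f\|, \quad \forall f\in U.
\end{equation*}
Here the two hypotheses $\lambda_{1}<1$ and $\lambda_{2}>-1$ are exactly what is needed: they guarantee $1-\lambda_{1}>0$ and $1+\lambda_{2}>0$, so the coefficients may be divided without reversing the inequality. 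Setting $M=\frac{1-\lambda_{1}}{1+\lambda_{2}}>0$, I obtain $\|S_{\xi\chi}f\|\geq M\|f\|$ for all $f\in U$, i.e. $S_{\xi\chi}$ is bounded below.

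With $S_{\xi\chi}$ bounded below, I would then appeal to the previous theorem: its final part shows that if $S_{\xi\chi}$ is bounded below by $M$, then using the Bessel bound $\|S_{\xi\chi}f\|^{2}\leq D_{2}\int_{\Theta}v^{2}(w)\|\chi_{w}\pi_{F(w)}f\|^{2}d\mu(w)$ one gets
\begin{equation*}
\frac{M^{2}}{D_{2}}\|f\|^{2}\leq \int_{\Theta}v^{2}(w)\|\chi_{w}\pi_{F(w)}f\|^{2}d\mu(w),
\end{equation*}
which is the lower frame inequality. The upper bound $D_{1}$ is already available since $\chi$ is a continuous $g-$fusion Bessel sequence by hypothesis. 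Hence $\chi$ is a continuous $g-$fusion frame for $U$ with bounds $\frac{M^{2}}{D_{2}}$ and $D_{1}$.

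I do not expect a genuine obstacle here, since the result is essentially a corollary of the bounded-below criterion proved just above. The only point requiring care is the correct handling of the signs of $1-\lambda_{1}$ and $1+\lambda_{2}$, which is precisely where the two sign conditions on $\lambda_{1},\lambda_{2}$ enter and is what makes the division legitimate. If one prefers a self-contained argument rather than citing the earlier theorem, the step $\|S_{\xi\chi}f\|^{2}\leq D_{2}\int_{\Theta}v^{2}(w)\|\chi_{w}\pi_{F(w)}f\|^{2}d\mu(w)$ can be re-derived from the Cauchy--Schwarz estimate used in the boundedness proof of $S_{\xi\chi}$, and then chained with $M\|f\|\leq\|S_{\xi\chi}f\|$ to produce the lower bound directly.
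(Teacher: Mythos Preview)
Your proof is correct and follows essentially the same route as the paper: apply the reverse triangle inequality to the perturbation hypothesis, regroup to obtain $(1-\lambda_{1})\|f\|\leq (1+\lambda_{2})\|S_{\xi\chi}f\|$, and then combine with the estimate $\|S_{\xi\chi}f\|^{2}\leq D_{2}\int_{\Theta}v^{2}(w)\|\chi_{w}\pi_{F(w)}f\|^{2}d\mu(w)$ to get the lower frame bound $\frac{1}{D_{2}}\bigl(\frac{1-\lambda_{1}}{1+\lambda_{2}}\bigr)^{2}$. The only cosmetic difference is that the paper carries out this last step explicitly rather than citing the preceding theorem, but you already note that alternative yourself.
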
	
\begin{proof}
	For all $f\in U$ we get 
	\begin{equation*}
		\|f\|-\|S_{\xi\chi}f\|\leq \|f-S_{\xi\chi}f\|\leq \lambda_{1}\|f\|+\lambda_{2}\|S_{\xi\chi}f\|,
	\end{equation*}
	then
	\begin{equation*}
		(1-\lambda_{1})\|f\|\leq (1+\lambda_{2})\|S_{\xi\chi}f\|,
	\end{equation*}
	so
	\begin{equation*}
		\bigg(\frac{1-\lambda_{1}}{1+\lambda_{2}}\bigg)\|f\|\leq \sqrt{D_{2}}\bigg(\int_{\Theta}v^{2}(w)\|\chi_{w}\pi_{F(w)}(f)\|^{2}d\mu(w)\bigg)^{\frac{1}{2}}
	\end{equation*}
	therefore
	\begin{equation}\label{eq2}
		\frac{1}{D_{2}}\bigg(\frac{1-\lambda_{1}}{1+\lambda_{2}}\bigg)^{2}\|f\|^{2}\leq \int_{\Theta}v^{2}(w)\|\chi_{w}\pi_{F(w)}f\|^{2}d\mu(w).
	\end{equation}
	Hence, $\chi$ is a continuous $g-$fusion frame for $U$ with bounds $(1-\lambda_{1})^{2}(1+\lambda_{2})^{-2}D_{2}^{-1}$ and $D_{1}$.
\end{proof}	
\begin{theorem}
	Let $S_{\xi\chi}$ be the frame operator for a pair of continuous $g-$fusion bessel sequences  $\Lambda$ and $\Gamma$ of the bounds $D_{1}$ and $D_{2}$, respectively. Assume $\lambda\in [0,1)$ such that 
	\begin{equation*}
		\|f-S_{\xi\chi}f\|\leq \lambda\|f\|, \quad\forall f\in U.	
	\end{equation*}
	Then $\chi$ and $\xi$ are continuous $g-$fusion frame for $U$.
\end{theorem}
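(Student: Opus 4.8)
The plan is to use the single norm estimate $\|f - S_{\xi\chi}f\| \le \lambda\|f\|$ twice: once as stated to produce the lower frame bound for $\chi$, and once transported to the adjoint operator to produce the lower frame bound for $\xi$. Since the two systems are assumed Bessel with bounds $D_{1}$ (for $\chi$) and $D_{2}$ (for $\xi$), the upper frame inequalities are already available, so only the lower bounds must be established.

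First I would treat $\chi$. For every $f\in U$ the reverse triangle inequality gives $\|f\| - \|S_{\xi\chi}f\| \le \|f - S_{\xi\chi}f\| \le \lambda\|f\|$, hence $(1-\lambda)\|f\| \le \|S_{\xi\chi}f\|$. Combining this with the upper estimate
\begin{equation*}
\|S_{\xi\chi}f\| \le \sqrt{D_{2}}\bigg(\int_{\Theta}v^{2}(w)\|\chi_{w}\pi_{F(w)}f\|^{2}d\mu(w)\bigg)^{\frac{1}{2}}
\end{equation*}
obtained in the boundedness theorem for $S_{\xi\chi}$, and squaring, I would deduce
\begin{equation*}
\frac{(1-\lambda)^{2}}{D_{2}}\|f\|^{2} \le \int_{\Theta}v^{2}(w)\|\chi_{w}\pi_{F(w)}f\|^{2}d\mu(w).
\end{equation*}
Together with the Bessel bound $D_{1}$ for $\chi$, both frame inequalities hold, so $\chi$ is a continuous $g$-fusion frame for $U$ with bounds $(1-\lambda)^{2}D_{2}^{-1}$ and $D_{1}$.

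The key point for $\xi$ is that the pointwise bound passes to the adjoint. The hypothesis is exactly the operator-norm statement $\|I_{U} - S_{\xi\chi}\| \le \lambda$, and since $\|A^{\ast}\| = \|A\|$ we also have $\|I_{U} - S_{\xi\chi}^{\ast}\| \le \lambda$. Using the identity $S_{\xi\chi}^{\ast} = S_{\chi\xi}$ proved earlier, this reads $\|g - S_{\chi\xi}g\| \le \lambda\|g\|$ for all $g\in U$. Repeating the reverse triangle inequality argument yields $(1-\lambda)\|g\| \le \|S_{\chi\xi}g\| = \|S_{\xi\chi}^{\ast}g\|$, and the second estimate established in the boundedness theorem gives
\begin{equation*}
\|S_{\xi\chi}^{\ast}g\| \le \sqrt{D_{1}}\bigg(\int_{\Theta}s^{2}(w)\|\xi_{w}\pi_{G(w)}g\|^{2}d\mu(w)\bigg)^{\frac{1}{2}}.
\end{equation*}
Squaring and rearranging gives $\frac{(1-\lambda)^{2}}{D_{1}}\|g\|^{2} \le \int_{\Theta}s^{2}(w)\|\xi_{w}\pi_{G(w)}g\|^{2}d\mu(w)$, and with the Bessel bound $D_{2}$ for $\xi$ this shows $\xi$ is a continuous $g$-fusion frame with bounds $(1-\lambda)^{2}D_{1}^{-1}$ and $D_{2}$.

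The step that requires the most care is the transfer to the adjoint: one must justify that the pointwise inequality is precisely $\|I_{U} - S_{\xi\chi}\| \le \lambda$, then combine $\|(I_{U}-S_{\xi\chi})^{\ast}\| = \|I_{U}-S_{\xi\chi}\|$ with $S_{\xi\chi}^{\ast} = S_{\chi\xi}$ to recover the matching pointwise bound for $S_{\chi\xi}$. This is what distinguishes the present statement from the earlier perturbation theorem, which concluded only that $\chi$ is a frame. Everything else is a routine repetition of the Cauchy--Schwarz boundedness estimates and the reverse triangle inequality already used for $\chi$.
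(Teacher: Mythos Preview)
Your proposal is correct and follows essentially the same route as the paper's proof: for $\chi$ the paper invokes the preceding perturbation result (which is exactly your reverse triangle inequality plus the estimate $\|S_{\xi\chi}f\|\le\sqrt{D_{2}}\bigl(\int_{\Theta}v^{2}(w)\|\chi_{w}\pi_{F(w)}f\|^{2}\,d\mu(w)\bigr)^{1/2}$), and for $\xi$ it passes to the adjoint via $\|(I_{U}-S_{\xi\chi})^{\ast}\|=\|I_{U}-S_{\xi\chi}\|\le\lambda$ and then applies the companion bound $\|S_{\xi\chi}^{\ast}g\|\le\sqrt{D_{1}}\bigl(\int_{\Theta}s^{2}(w)\|\xi_{w}\pi_{G(w)}g\|^{2}\,d\mu(w)\bigr)^{1/2}$, arriving at the same frame bounds you state.
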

\begin{proof}
	By putting $\lambda_{1}=\lambda$ and $\lambda_{2}=0$ in \eqref{eq2}, we have 
	\begin{equation*}
		\frac{(1-\lambda)^{2}}{D_{2}}\|f\|^{2}\leq \int_{\Omega}v^{2}(w)\|\chi_{w}\pi_{F(w)}(f)\|^{2}d\mu(w),
	\end{equation*}
	and hence $\chi$ is a continuous $g-$fusion frame. Now, for all $f\in U$ we get 
	\begin{equation*}
		\|f-S_{\xi\chi}^{\ast}f\|=\|(I_{U}-S_{\xi\chi})^{\ast}\|\leq \|(I_{U}-S_{\xi\chi})\|\|f\|\leq \lambda\|f\|,
	\end{equation*}
	then
	\begin{equation*}
		(1-\lambda)\|f\|\leq \|S_{\xi\chi}^{\ast}f\|\leq \sqrt{D_{1}}\bigg(\int_{\Theta}s^{2}(w)\|\xi_{w}\pi_{G(w)}f\|^{2}d\mu(w)\bigg)^{\frac{1}{2}}
	\end{equation*}
	so
	\begin{equation*}
		\int_{\theta}s^{2}(w)\|\xi_{w}\pi_{G(w)}f\|^{2}d\mu(w)\geq \frac{(1-\lambda)^{2}}{D_{1}}\|f\|^{2}, \quad \forall f\in U.
	\end{equation*}
	Therefore, $\xi$ is a continuous $g-$fusion frame with bounds $\frac{(1-\lambda)^{2}}{D_{1}}$ and $D_{2}$.
\end{proof}	

\begin{definition}
	Let $U$ and $X$ be two Hilbert spaces. Define 
	\begin{equation*}
		U\oplus X=\{(f,g):f\in U,g\in X\}
	\end{equation*}
	Then $U\oplus X$ froms a Hilbert space with respect to point-wise operations and inner product defined by 
	\begin{equation*}
		\langle (f,g),(f^{'},g^{'})\rangle =\langle f,f^{'}\rangle_{U}+\langle g,g^{'}\rangle_{X},\quad \forall f,f^{'}\in U \quad and \quad \forall g,g^{'}\in X.
	\end{equation*}
	Now, if $L\in B(U,Z), T\in B(X,Y)$, then for all $f\in U$, $g\in X$ we define 
	\begin{equation*}
		L\oplus T\in B(U\oplus X, Z\oplus Y)\quad by \quad (L\oplus T)(f,g)=(Lf,Tg),
	\end{equation*}
	and $(L\oplus T)^{\ast}=L^{\ast}\oplus T^{\ast}$, where $Z, Y$ are Hilbert spaces and also we define 
	$\pi_{F(w)\oplus G(w)}(f,g)=(\pi_{F(w)}f,\pi_{G(w)}f)$, where $\pi_{F(w)}, \pi_{G(w)}$ and $\pi_{F(w)\oplus G(w)}$ are orthonormal projections onto the closed subspaces $F(w)\subset U$, $G(w)\subset X$ and $F(w)\oplus G(w)\subset U\oplus X$, respectively.
	
	From here we assume that for each $w\in\Theta$, $F(w)\oplus G(w)$ are the closed subspaces of $U\oplus X$ and $\Gamma_{w}\in B(X,X_{w})$, where $\{X_{w}\}_{w\in\Theta}$ is the collection of the spaces and $\chi_{w}\oplus \xi_{w}\in B(U\oplus X,U_{w}\oplus X_{w})$.
\end{definition}	

\begin{theorem}
	Let $\chi=\{F(w),\chi_{w},v(w)\}_{w\in\Theta}$ be a continuous $g-$fusion frame for $U$ with bounds $A,B$ and $\xi=\{G(w), \xi_{w},v(w)\}_{w\in\Theta}$ be a continuous $g-$fusion frame for $U$ with bounds for $C,D$. Then $\chi\oplus \xi=\{F(w)\oplus G(w),\chi_{w}\oplus \xi_{w},v(w)\}_{w\in\Theta}$ is a continuous $g-$fusion frame for $U\oplus X$ with bounds $\min\{A,C\}$, $\max\{B,D\}$. Furthermore, if $S_{\chi}, S_{\xi}$ and $S_{\chi\oplus \xi}$ are continuous $g-$fusion frame operators for $\chi,\xi,\chi\oplus \xi$, respectively, then we have $S_{\chi\oplus \xi}=S_{\chi}\oplus S_{\xi}$.
\end{theorem}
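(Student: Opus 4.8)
The plan is to reduce the whole statement to the orthogonal splitting of the norm on $U\oplus X$, namely $\|(f,g)\|^2=\|f\|_U^2+\|g\|_X^2$, so that the two given frame inequalities decouple completely and can be recombined with the elementary $\min$/$\max$ estimates $p\,\alpha+q\,\beta\le\max\{p,q\}(\alpha+\beta)$ and $p\,\alpha+q\,\beta\ge\min\{p,q\}(\alpha+\beta)$ for nonnegative $\alpha,\beta$.

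First I would record the pointwise identity. Since $(\chi_w\oplus\xi_w)\pi_{F(w)\oplus G(w)}(f,g)=(\chi_w\pi_{F(w)}f,\xi_w\pi_{G(w)}g)$ and the inner product on $U_w\oplus X_w$ is additive in its two components, for every $(f,g)\in U\oplus X$ and every $w\in\Theta$ one has
\[
\|(\chi_w\oplus\xi_w)\pi_{F(w)\oplus G(w)}(f,g)\|^2=\|\chi_w\pi_{F(w)}f\|^2+\|\xi_w\pi_{G(w)}g\|^2 .
\]
Multiplying by $v^2(w)$ and integrating over $\Theta$ splits the frame expression for $\chi\oplus\xi$ into the sum of the frame expressions for $\chi$ and for $\xi$. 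Applying the two frame inequalities termwise, the upper estimate is at most $B\|f\|^2+D\|g\|^2\le\max\{B,D\}\,\|(f,g)\|^2$, while the lower estimate is at least $A\|f\|^2+C\|g\|^2\ge\min\{A,C\}\,\|(f,g)\|^2$. This establishes that $\chi\oplus\xi$ is a continuous $g$-fusion frame for $U\oplus X$ with the claimed bounds $\min\{A,C\}$ and $\max\{B,D\}$.

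For the operator identity I would use the weak definition of the continuous $g$-fusion frame operator together with $(\chi_w\oplus\xi_w)^\ast=\chi_w^\ast\oplus\xi_w^\ast$ and the componentwise action $\pi_{F(w)\oplus G(w)}(f,g)=(\pi_{F(w)}f,\pi_{G(w)}g)$. These force the integrand defining $S_{\chi\oplus\xi}(f,g)$ to be
\[
\bigl(v^2(w)\pi_{F(w)}\chi_w^\ast\chi_w\pi_{F(w)}f,\ v^2(w)\pi_{G(w)}\xi_w^\ast\xi_w\pi_{G(w)}g\bigr),
\]
whose integral, by the definitions of $S_\chi$ and $S_\xi$, is exactly $(S_\chi f,S_\xi g)=(S_\chi\oplus S_\xi)(f,g)$. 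Testing against an arbitrary $(f',g')\in U\oplus X$ then yields $S_{\chi\oplus\xi}=S_\chi\oplus S_\xi$.

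I expect the only point requiring attention — rather than a genuine obstacle — to be the admissibility of the family $\chi\oplus\xi$, that is, the weak measurability of the map $w\mapsto\pi_{F(w)\oplus G(w)}(f,g)$ and of $w\mapsto(\chi_w\oplus\xi_w)\pi_{F(w)\oplus G(w)}(f,g)$. This follows immediately from the componentwise measurability assumed for $\chi$ and $\xi$ and from the definition of $\pi_{F(w)\oplus G(w)}$ as the orthogonal projection onto $F(w)\oplus G(w)\subset U\oplus X$. Everything else is the routine bookkeeping of the direct-sum norm identity.
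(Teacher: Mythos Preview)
Your proposal is correct and follows essentially the same route as the paper: both arguments reduce to the pointwise splitting $\|(\chi_w\oplus\xi_w)\pi_{F(w)\oplus G(w)}(f,g)\|^2=\|\chi_w\pi_{F(w)}f\|^2+\|\xi_w\pi_{G(w)}g\|^2$, integrate, apply the two frame inequalities termwise with the $\min/\max$ trick, and then unwind the componentwise action of $(\chi_w\oplus\xi_w)^\ast$ and $\pi_{F(w)\oplus G(w)}$ to identify the frame operator as $S_\chi\oplus S_\xi$. Your brief remark on measurability is an addition the paper omits, but otherwise the two proofs are the same computation.
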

\begin{proof}
	Let $(f,g)\in U\oplus X$. Then 
	\begin{align*}
		\int_{\Theta}v^{2}(w)&\|(\chi_{w}\oplus \xi_{w})\pi_{F(w)\oplus G(w)}(f,g)\|^{2}d\mu(w)\\&=\int_{\theta}v^{2}(w)\langle (\chi_{w}\oplus \xi_{w})\pi_{F(w)\oplus G(w)}(f,g),(\chi_{w}\oplus \xi_{w})\pi_{F(w)\oplus G(w)}(f,g)\rangle d\mu(w)\\&=\int_{\Theta}v^{2}(w)\langle (\chi_{w}\oplus \xi_{w})(\pi_{F(w)}f,\pi_{G(w)}g),(\chi_{w}\oplus \xi_{w})(\pi_{F(w)}f,\pi_{G(w)}g)\rangle d\mu(w)\\&=\int_{\theta}v^{2}(w)\langle (\chi_{w}\pi_{F(w)}f,\xi_{w}\pi_{G(w)}g),(\chi_{w}\pi_{F(w)}f,\xi_{w}\pi_{G(w)}g)\rangle d\mu(w)\\&=\int_{\theta}v^{2}(w)\bigg(\langle \chi_{w}\pi_{F(w)}f,\chi_{w}\pi_{F(w)}f\rangle_{U}+\langle \xi_{w}\pi_{G(w)}g,\xi_{w}\pi_{G(w)}g\rangle_{X}\bigg)d\mu(w)\\&=\int_{\theta}v^{2}(w)\|\chi_{w}\pi_{F(w)}f\|_{U}^{2}d\mu(w)+\int_{\theta}v^{2}(w)\|\xi_{w}\pi_{G(w)}g\|_{X}^{2}d\mu(w)\\&\leq B\|f\|_{U}^{2}+D\|g\|_{X}^{2}\\&\leq \max\{B,D\}\|(f,g)\|_{U\oplus X}^{2}.
	\end{align*}
	Similarly, it can be shown that 
	\begin{equation*}
		\min\{A,C\}\|(f,g)\|_{U\oplus X}^{2}\leq \int_{\Theta}v^{2}(w)\|(\chi_{w}\oplus \xi_{w})\pi_{F(w)\oplus G(w)}(f,g)\|^{2}d\mu(w).
	\end{equation*}
	Moreover, for $(f,g)\in U\oplus X$, we get 
	\begin{align*}
		S_{\chi\oplus \xi}(f,g)&=\int_{\Theta}v^{2}(w)\pi_{F(w)\oplus G(w)}(\chi_{w}\oplus \xi_{w})^{\ast}(\chi_{w}\oplus \xi_{w})\pi_{F(w)\oplus G(w)}(f,g)d\mu(w)\\&=\int_{\theta}v^{2}(w)\pi_{F(w)\oplus G(w)}(\chi_{w}\oplus \xi_{w})^{\ast}(\chi_{w}\oplus \xi_{w})(\pi_{F(w)}f,\pi_{G(w)}g)d\mu(w)\\&=\int_{\Theta}v^{2}(w)(\chi_{w}\oplus \xi_{w})^{\ast}(\chi_{w}\pi_{F(w)}f,\xi_{w}\pi_{G(w)}g)d\mu(w)\\&=\int_{\Theta}v^{2}(w)\pi_{F(w)\oplus G(w)}(\chi_{w}^{\ast}\oplus \xi_{w}^{\ast})(\chi_{w}\pi_{F(w)}f,\xi_{w}\pi_{G(w)}g)d\mu(w)\\&=\int_{\Theta}v^{2}(w)\pi_{F(w)\oplus G(w)}\bigg(\chi_{w}^{\ast}\chi_{w}\pi_{F(w)}f,\xi_{w}^{\ast}\xi_{w}\pi_{G(w)}g\bigg)d\mu(w)\\&=\int_{\theta}v^{2}(w)\bigg(\pi_{F(w)}\chi_{w}^{\ast}\chi_{w}\pi_{F(w)}f,\pi_{G(w)}\xi_{w}^{\ast}\Gamma_{w}\pi_{G(w)}g\bigg)d\mu(w)\\&=\bigg(\int_{\Theta}v^{2}(w)\pi_{F(w)}\chi_{w}^{\ast}\chi_{w}\pi_{F(w)}fd\mu(w),\int_{\Theta}v^{2}(w)\pi_{G(w)}\xi_{w}^{\ast}\xi_{w}\pi_{G(w)}gd\mu(w)\bigg)\\&=(S_{\chi}f,S_{\xi}g)\\&=(S_{\chi}\oplus S_{\xi})(f,g).
	\end{align*}
	Then, $S_{\chi\oplus \xi}=S_{\chi}\oplus S_{\xi}$.
\end{proof}
\begin{theorem}
	Let $\chi\oplus \xi=\{F(w)\oplus G(w),\chi_{w}\oplus \xi_{w},v(w)\}_{w\in\Theta}$ be a continuous $g-$fusion frame for $U\oplus X$ with frame operator $S_{\chi\oplus \xi}$. Then 
	\begin{equation*}
		\delta=\{S_{\chi\oplus \xi}^{-\frac{1}{2}}(F(w)\oplus G(w)),(\chi_{w}\oplus \xi_{w})\pi_{F(w)\oplus G(w)}S_{\chi\oplus \xi}^{-\frac{1}{2}},v(w)\}_{w\in \Theta}
	\end{equation*}
	is a continuous Parseval $g-$fusion frame for $U\oplus X$.
\end{theorem}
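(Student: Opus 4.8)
The plan is to recognize $\delta$ as the canonical Parseval frame attached to $\chi\oplus\xi$, so that the entire argument reduces to computing its frame operator and checking that it equals the identity on $U\oplus X$. Throughout I would abbreviate $W=U\oplus X$, $S=S_{\chi\oplus\xi}$, $V_{w}=F(w)\oplus G(w)$, $P_{w}=\pi_{V_{w}}$ and $\Phi_{w}=\chi_{w}\oplus\xi_{w}$, so that the defining identity of the given $g$-fusion frame reads $\int_{\Theta}v^{2}(w)\|\Phi_{w}P_{w}g\|^{2}\,d\mu(w)=\langle Sg,g\rangle$ for every $g\in W$. With this notation the family $\delta$ has local operators $\Phi_{w}P_{w}S^{-1/2}$ attached to the subspaces $S^{-1/2}V_{w}$.

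First I would record the structural facts about $S$. Because $\chi\oplus\xi$ is a continuous $g$-fusion frame, $S$ is a positive, self-adjoint, bounded and boundedly invertible operator on $W$; hence $S^{-1/2}$ exists and is itself positive, self-adjoint and invertible. In particular $S^{-1/2}$ carries each closed subspace $V_{w}$ onto the closed subspace $S^{-1/2}V_{w}$, so that $\overline{S^{-1/2}V_{w}}=S^{-1/2}V_{w}$, the map $\pi_{S^{-1/2}V_{w}}$ is a genuine orthogonal projection onto a closed subspace, and the weak measurability requirements for $\delta$ are inherited from those of $\chi\oplus\xi$.

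The key step is to eliminate the new projection $\pi_{S^{-1/2}V_{w}}$ from the frame sum. Applying Theorem \ref{lem4} with $T=S^{-1/2}$ (so that $T^{\ast}=S^{-1/2}$ by self-adjointness) and the subspace $V_{w}$ gives
\begin{equation*}
	P_{w}S^{-1/2}=P_{w}S^{-1/2}\pi_{\overline{S^{-1/2}V_{w}}}=P_{w}S^{-1/2}\pi_{S^{-1/2}V_{w}}.
\end{equation*}
Consequently, for every $f\in W$ the local analysis term of $\delta$ collapses to $\Phi_{w}P_{w}S^{-1/2}\pi_{S^{-1/2}V_{w}}f=\Phi_{w}P_{w}S^{-1/2}f$. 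I expect this projection-elimination identity, together with the closedness of $S^{-1/2}V_{w}$, to be the only delicate point; everything else is a direct computation.

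Finally I would substitute this identity into the frame expression for $\delta$ and invoke the defining identity of $\chi\oplus\xi$ with $g=S^{-1/2}f$. Using that $S^{-1/2}$ is self-adjoint,
\begin{align*}
	\int_{\Theta}v^{2}(w)\big\|\Phi_{w}P_{w}S^{-1/2}\pi_{S^{-1/2}V_{w}}f\big\|^{2}\,d\mu(w)
	&=\int_{\Theta}v^{2}(w)\big\|\Phi_{w}P_{w}S^{-1/2}f\big\|^{2}\,d\mu(w)\\
	&=\langle SS^{-1/2}f,S^{-1/2}f\rangle=\langle f,f\rangle=\|f\|^{2},
\end{align*}
for all $f\in W=U\oplus X$. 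This single equality delivers both frame bounds equal to $1$ simultaneously, so $\delta$ is a continuous Parseval $g$-fusion frame for $U\oplus X$, as claimed.
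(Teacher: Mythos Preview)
Your proof is correct and follows essentially the same approach as the paper: both use the positivity and invertibility of $S_{\chi\oplus\xi}$ to produce $S_{\chi\oplus\xi}^{-1/2}$, invoke Theorem~\ref{lem4} to replace $\pi_{F(w)\oplus G(w)}S_{\chi\oplus\xi}^{-1/2}\pi_{S_{\chi\oplus\xi}^{-1/2}(F(w)\oplus G(w))}$ by $\pi_{F(w)\oplus G(w)}S_{\chi\oplus\xi}^{-1/2}$, and then compute the frame sum to obtain $\|f\|^{2}$. Your presentation is slightly more compact in that you apply the frame-operator identity $\int_{\Theta}v^{2}(w)\|\Phi_{w}P_{w}g\|^{2}\,d\mu(w)=\langle Sg,g\rangle$ directly with $g=S^{-1/2}f$, whereas the paper first writes $(f,g)=S^{-1/2}SS^{-1/2}(f,g)$ and then takes the inner product, but the substance is identical.
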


\begin{proof}
	Because $S_{\chi\oplus \xi}$ is a positive operator, there exists a unique positive square root $S_{\chi\oplus \xi}^{\frac{1}{2}}$ (or $S_{\chi\oplus \xi}^{-\frac{1}{2}}$) and they commute with $S_{\chi\oplus \xi}$ and $S_{\chi\oplus \xi}^{-1}$. Hence, all  $(f,g)\in U\oplus X$ we get 
	\begin{align*}
		(f,g)&=S_{\chi\oplus \xi}^{-\frac{1}{2}}S_{\chi\oplus \xi}S_{\chi\oplus \xi}^{-\frac{1}{2}}(f,g)\\&=\int_{\Theta}v^{2}(w)S_{\chi\oplus \xi}^{-\frac{1}{2}}\pi_{F(w)\oplus G(w)}(\chi_{w}\oplus \xi_{w})^{\ast}(\chi_{w}\oplus \xi_{w})\pi_{F(w)\oplus G(w)}S_{\chi\oplus \xi}^{-\frac{1}{2}}(f,g)d\mu(w).
	\end{align*}
	For all $(f,g)\in U\oplus X$ we get 
	\begin{align*}
		\|(f,g)\|^{2}&=\langle (f,g),(f,g)\rangle\\&=\langle \int_{\Theta}v^{2}(w)S_{\chi\oplus \xi}^{-\frac{1}{2}}\pi_{F(w)\oplus G(w)}(\chi_{w}\oplus \xi_{w})^{\ast}(\chi_{w}\oplus \xi_{w})\pi_{F(w)\oplus G(w)}S_{\chi\oplus\xi}^{-\frac{1}{2}}(f,g)d\mu(w), (f,g)\rangle \\&=\int_{\Theta}v^{2}(w)\langle (\chi_{w}\oplus \xi_{w})\pi_{F(w)\oplus G(w)}S_{\chi\oplus \xi}^{-\frac{1}{2}}(f,g),(\chi_{w}\oplus \xi_{w})\pi_{F(w)\oplus G(w)}S_{\chi\oplus \xi}^{-\frac{1}{2}}(f,g)\rangle d\mu(w)\\&=\int_{\Theta}v^{2}(w)\|(\chi_{w}\oplus \xi_{w})\pi_{F(w)\oplus G(w)}S_{\chi\oplus \xi}^{-\frac{1}{2}}(f,g)\|^{2}d\mu(w)\\&=\int_{\theta}v^{2}(w)\|(\chi_{w}\oplus \xi_{w})\pi_{F(w)\oplus G(w)}S_{\chi\oplus\xi}^{-\frac{1}{2}}\pi_{S_{\chi\oplus\xi}^{-\frac{1}{2}}(F(w)\oplus G(w))}(f,g)\|^{2}d\mu(w). 
	\end{align*}
	This implies that $\delta$ is a continuous Parseval $g-$fusion frame for $U\oplus X$.
\end{proof}
\begin{theorem}
	Let $\chi\oplus \xi=\{F(w)\oplus G(w),\chi_{w}\oplus \xi_{w},v(w)\}_{w\in\Theta}$ be a continuous $g-$fusion frame for $U\oplus X$ with bounds $A_{1}, B_{1}$ and $S_{\chi\oplus \xi}$ be the corresponding frame operator. Then 
	\begin{equation*}
		\delta=\{S_{\chi\oplus \xi}^{-1}(F(w)\oplus G(w)),(\chi_{w}\oplus \xi_{w})\pi_{F(w)\oplus G(w)}S_{\chi\oplus \xi}^{-1},v(w)\}_{w\in \theta}
	\end{equation*}
	is a continuous $g-$fusion frame for $U\oplus X$ with frame operator $S_{\chi\oplus \xi}^{-1}$.
\end{theorem}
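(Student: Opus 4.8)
The plan is to reduce the whole statement to the spectral behaviour of the single operator $S:=S_{\chi\oplus\xi}$ together with the projection identity of Theorem~\ref{lem4}. First I would record the standard facts: since $\chi\oplus\xi$ is a continuous $g$-fusion frame with bounds $A_{1},B_{1}$, its frame operator $S$ is bounded, positive, self-adjoint and invertible, and satisfies $A_{1}I_{U\oplus X}\leq S\leq B_{1}I_{U\oplus X}$; consequently $S^{-1}$ is again positive, self-adjoint and invertible, with $B_{1}^{-1}I_{U\oplus X}\leq S^{-1}\leq A_{1}^{-1}I_{U\oplus X}$. I would also note that, because $S^{-1}$ is bounded and invertible, each subspace $S^{-1}(F(w)\oplus G(w))$ is closed, so the orthogonal projection onto it is well defined and $\overline{S^{-1}(F(w)\oplus G(w))}=S^{-1}(F(w)\oplus G(w))$.

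The decisive step is the projection identity. Applying Theorem~\ref{lem4} with the self-adjoint operator $T=S^{-1}$ (so that $T^{\ast}=S^{-1}$) and $V=F(w)\oplus G(w)$ yields
\[
	\pi_{F(w)\oplus G(w)}S^{-1}=\pi_{F(w)\oplus G(w)}S^{-1}\pi_{S^{-1}(F(w)\oplus G(w))}.
\]
Writing $\Psi_{w}:=(\chi_{w}\oplus\xi_{w})\pi_{F(w)\oplus G(w)}S^{-1}$ for the generator of $\delta$, this says that $\Psi_{w}\,\pi_{S^{-1}(F(w)\oplus G(w))}=\Psi_{w}$, i.e. the extra projection onto the new subspace is absorbed; taking adjoints and using that orthogonal projections are self-adjoint gives likewise $\pi_{S^{-1}(F(w)\oplus G(w))}\Psi_{w}^{\ast}=\Psi_{w}^{\ast}$.

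Next I would compute the frame operator $S_{\delta}$ of $\delta$. Inserting these two absorbed projections and then factoring the constant operator $S^{-1}$ outside the integral, I obtain
\[
	S_{\delta}(f,g)=\int_{\Theta}v^{2}(w)\,\Psi_{w}^{\ast}\Psi_{w}(f,g)\,d\mu(w)=S^{-1}\Bigl[\int_{\Theta}v^{2}(w)\,\pi_{F(w)\oplus G(w)}(\chi_{w}\oplus\xi_{w})^{\ast}(\chi_{w}\oplus\xi_{w})\pi_{F(w)\oplus G(w)}\,d\mu(w)\Bigr]S^{-1}(f,g).
\]
The bracketed integral is exactly the original frame operator $S=S_{\chi\oplus\xi}$, so $S_{\delta}=S^{-1}SS^{-1}=S^{-1}$, which is the asserted frame operator. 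Finally, since $\langle S_{\delta}(f,g),(f,g)\rangle=\int_{\Theta}v^{2}(w)\|\Psi_{w}\pi_{S^{-1}(F(w)\oplus G(w))}(f,g)\|^{2}\,d\mu(w)$ and $S_{\delta}=S^{-1}$, the operator inequalities for $S^{-1}$ translate directly into $B_{1}^{-1}\|(f,g)\|^{2}\leq\int_{\Theta}v^{2}(w)\|(\chi_{w}\oplus\xi_{w})\pi_{F(w)\oplus G(w)}S^{-1}\pi_{S^{-1}(F(w)\oplus G(w))}(f,g)\|^{2}\,d\mu(w)\leq A_{1}^{-1}\|(f,g)\|^{2}$, so $\delta$ is a continuous $g$-fusion frame with bounds $B_{1}^{-1}$ and $A_{1}^{-1}$.

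I expect the main obstacle to be the clean justification of the projection absorption: one must verify that $S^{-1}(F(w)\oplus G(w))$ is closed so that Theorem~\ref{lem4} applies and the projection onto the dual subspace can be removed. Once that identity is secured, the evaluation of $S_{\delta}$ is purely algebraic. A secondary, routine point is confirming that $w\mapsto\pi_{S^{-1}(F(w)\oplus G(w))}(f,g)$ is weakly measurable, which follows from the measurability of the original family and the boundedness of $S^{-1}$.
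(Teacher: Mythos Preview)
Your proof is correct and shares the paper's core machinery: both invoke Theorem~\ref{lem4} to absorb the projection onto $S^{-1}(F(w)\oplus G(w))$, and both compute $S_{\delta}=S^{-1}SS^{-1}=S^{-1}$ by factoring $S^{-1}$ outside the integral. The difference lies in the order of operations and in how the frame bounds are obtained. The paper first establishes the bounds directly---the upper bound via $\|S^{-1}(f,g)\|\leq\|S^{-1}\|\,\|(f,g)\|$ together with the Bessel inequality for $\chi\oplus\xi$, and the lower bound via a Cauchy--Schwarz argument on $\|(f,g)\|^{4}=|\langle(f,g),(f,g)\rangle|^{2}$---and only afterwards computes $S_{\delta}$. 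You instead compute $S_{\delta}=S^{-1}$ first and then read both bounds off the operator inequality $B_{1}^{-1}I\leq S^{-1}\leq A_{1}^{-1}I$. Your route is shorter and yields the sharper upper bound $A_{1}^{-1}$ in place of the paper's $B_{1}\|S^{-1}\|^{2}$; the only caveat is that, strictly speaking, one should verify the Bessel property of $\delta$ before naming its frame operator, but this is immediate from the same projection identity and the Bessel bound of the original system.
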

\begin{proof}
	For all $(f,g)\in U\oplus X$ we get 
	\begin{align*}
		(f,g)&=S_{\chi\oplus \xi}S_{\chi\oplus \xi}^{-1}(f,g)\\&=\int_{\Theta}v^{2}(w)\pi_{F(w)\oplus G(w)}(\chi_{w}\oplus\xi_{w})^{\ast}(\chi_{w}\oplus \xi_{w})\pi_{F(w)\oplus G(w)}S_{\chi\oplus \xi}^{-1}(f,g)d\mu(w)
	\end{align*}
	By theorem \ref{lem4}, for each $(f,g)\in U\oplus X$ we get 
	\begin{align*}
		\int_{\theta}v^{2}(w)&\|(\chi_{w}\oplus \xi_{w})\pi_{F(w)\oplus G(w)}S_{\chi\oplus \xi}^{-1}\pi_{S_{\chi\oplus\xi}^{-1}(F(w)\oplus G(w))}(f,g)\|^{2}d\mu(w)\\&=\int_{\Theta}v^{2}(w)\|(\chi_{w}\oplus\xi_{w})\pi_{F(w)\oplus G(w)}S_{\chi\oplus \xi}^{-1}(f,g)\|^{2}d\mu(w)\\&\leq B_{1}\|S_{\chi\oplus \xi}^{-1}\|^{2}\|(f,g)\|^{2}.
	\end{align*}
	On the other hand, we have 
	\begin{align*}
		\|(f,g)\|^{4}&=\bigg|\langle (f,g),(f,g)\rangle \bigg|^{2}\\&=\bigg|\langle \int_{\Theta}v^{2}\pi_{F(w)\oplus G(w)}(\chi_{w}\oplus \xi_{w})^{\ast}(\chi_{w}\oplus \xi_{w})\pi_{F(w)\oplus G(w)}S_{\chi\oplus\xi}^{-1}(f,g)d\mu(w),(f,g)\rangle \bigg|^{2}\\&=\bigg|\int_{\Theta}v^{2}(w)\langle (\chi_{w}\oplus \xi_{w})\pi_{F(w)\oplus G(w)}S_{\chi\oplus \xi}^{-1}(f,g),(\chi_{w}\oplus\xi_{w})\pi_{F(w)\oplus G(w)}(f,g)\rangle d\mu(w) \bigg|^{2}\\&\leq \int_{\Theta}v^{2}(w)\|(\chi_{w}\oplus \xi_{w})\pi_{F(w)\oplus G(w)}S_{\chi\oplus \xi}^{-1}(f,g)\|^{2}d\mu(w)\\&\quad \quad \int_{\Theta}v^{2}(w)\|(\chi_{w}\oplus \xi_{w})\pi_{F(w)\oplus G(w)}(f,g)\|^{2}d\mu(w)\\&\leq \int_{\Theta}v^{2}(w)\|(\chi_{w}\oplus\xi_{w})\pi_{F(w)\oplus G(w)}S_{\chi\oplus \xi}^{-1}(f,g)\|^{2}d\mu(w)B_{1}\|(f,g)\|^{2}\\&=B_{1}\|(f,g)\|^{2}\int_{\Theta}v^{2}(w)\|(\chi_{w}\oplus \xi_{w})\pi_{F(w)\oplus G(w)}S_{\chi\oplus\xi}^{-1}\pi_{S_{\chi\oplus\xi}^{-1}(F(w)\oplus G(w))}(f,g)\|^{2}d\mu(w).
	\end{align*}
	Then
	\begin{equation*}
		B_{1}^{-1}\|(f,g)\|\leq \int_{\Theta}v^{2}(w)\|(\chi_{w}\oplus\xi_{w})\pi_{F(w)\oplus G(w)}S_{\chi\oplus\xi}^{-1}\pi_{S_{\chi\oplus\xi}^{-1}(F(w)\oplus G(w))}(f,g)\|^{2}d\mu(w).
	\end{equation*}
	Hence, $\delta$ is a continuous $g-$fusion frame for $U\oplus X$. Let $S_{\delta}$ be the continuous $g-$fusion frame operator for $\delta$ and take $\delta_{w}=\chi_{w}\oplus \xi_{w}$. Now, for all $(f,g)\in U\oplus X$ 
	\begin{align*}
		&S_{\delta}(f,g)\\&	=\int_{\Theta}v^{2}(w)\pi_{S_{\chi\oplus\xi}^{-1}(F(w)\oplus G(w))}(\delta_{w}\pi_{F(w)\oplus G(w)}S_{\chi\oplus\xi}^{-1})^{\ast}(\delta_{w}\pi_{F(w)\oplus G(w)}S_{\chi\oplus \xi}^{-1})\pi_{S_{\chi\oplus\xi}^{-1}(F(w)\oplus G(w))}(f,g)d\mu(w)\\&=\int_{\Theta}v^{2}(w)(\pi_{F(w)\oplus G(w)}S_{\chi\oplus\xi}^{-1}\pi_{S_{\chi\oplus\xi}^{-1}(F(w)\oplus G(w))})^{\ast}\delta_{w}^{\ast}\delta_{w}(\pi_{F(w)\oplus G(w)}S_{\chi\oplus\xi}^{-1}\pi_{S_{\chi\oplus\xi}^{-1}(F(w)\oplus G(w))})(f,g)d\mu(w)\\&=\int_{\Theta}v^{2}(w)(\pi_{F(w)\oplus G(w)}S_{\chi\oplus\xi}^{-1})^{\ast}\delta_{w}^{\ast}\delta_{w}(\pi_{F(w)\oplus G(w)}S_{\chi\oplus\xi}^{-1})(f,g)d\mu(w)\\&=\int_{\Theta}v^{2}(w)S_{\chi\oplus\xi}^{-1}\pi_{F(w)\oplus G(w)}(\chi_{w}\oplus\xi_{w})^{\ast}(\chi_{w}\oplus\xi_{w})(\pi_{F(w)\oplus G(w)}S_{\chi\oplus\xi}^{-1})(f,g)d\mu(w)\\&=S_{\chi\oplus\xi}^{-1}\bigg(\int_{\Theta}v^{2}(w)\pi_{F(w)\oplus G(w)}(\chi_{w}\oplus\xi_{w})^{\ast}(\chi_{w}\oplus\xi_{w})\pi_{F(w)\oplus G(w)}(S_{\chi\oplus\xi}^{-1}(f,g))d\mu(w)\bigg)\\&=S_{\chi\oplus\xi}^{-1}S_{\chi\oplus\xi}(S_{\chi\oplus\xi}^{-1}(f,g))\\&=S_{\chi\oplus\xi}^{-1}(f,g).
	\end{align*}
	Therefore, $S_{\delta}=S_{\chi\oplus\xi}^{-1}$. 
	
\end{proof}

	\section*{Acknowledgments}
	It is our great pleasure to thank the referee for his careful reading of the paper and for several helpful suggestions.
	
	\section*{Contributions}
	All authors contributed substantially to this paper, participated in drafting and check-
	ing the manuscript. All authors read and approved the final manuscript.
	\section*{Ethics declarations}
	
	\subsection*{Availablity of data and materials}
	Not applicable.
	\subsection*{Conflict of interest}
	The authors declare that they have no competing interests.
	\subsection*{Fundings}
	Not applicable.


\begin{thebibliography}{99}
		
		
			\bibitem{AF} M. R. Abdollahpour, M. H. Faroughi, Continuous G-frames in Hilbert space, Southeast Asian Bulletin of Mathematics, vol. 32, pp. 1–19, 2008.
		
		\bibitem{COH} S. T. Ali, J. P. Antoine, J. P. Gazeau, {\it Coherent States, Wavelets, and Their Generalizations}, Springer-Verlag, Berlin, 2000.
		
		\bibitem{GAZ} Ali, S.T., Antoine, J.P., Gazeau, J.P.: Continuous frames in Hilbert spaces. Ann. Phys. 222, 1–37 (1993)
		
		\bibitem{EAEM} E. Alizadeh, A. Rahimi, E. Osgooei, M. Rahmani, Continuous $K-g-$fusion frames in Hilbert spaces. TWMS J. App. and Eng. Math. v. 11, N°1, 2021, pp. 44-55.
		
		\bibitem{AHDR} A. Askari-Hemmat, M. A. Dehghan, M. Radjabalipour, \emph{Generalized frames and their redundancy}, Proc. Am. Math. Soc. {\bf 129} (2001), 1143--1147.
		
		
		\bibitem{Assila} Assila, N., Labrigui, H., Touri, A. et al. Integral operator frames on Hilbert 
		$C^{\ast}$-modules. Ann Univ Ferrara (2024). \url{https://doi.org/10.1007/s11565-024-00501-z}
		
		\bibitem{CCO} P. G. Cazassa, O. Christensen, Perturbation of operators and applications to frame theory. The Journal of Fourier Analysis and Applications 3, 543–557 (1997). https://doi.org/10.1007/BF02648883
		
		\bibitem{DGM} I. Daubechies, A. Grossmann, Y. Meyer, \emph{Painless nonorthogonal expansions}, J. Math. Phys. 27 (1986), 1271–1283.
		
		\bibitem{DOG} R. Douglas, On majorization, factorization, and range inclusion of operators on Hilbert spaces
		Proc. Amer. Math. Soc., 17 (1966), pp. 413-415
		
		\bibitem{Duf} R. J. Duffin, A. C. Schaeffer, \emph{A class of nonharmonic fourier series},Trans. Amer. Math. Soc. 72 (1952),
		341-366.
		
		\bibitem{GHN}  J-P. Gabardo, D. Han, \emph{Frames associated with measurable space}, Adv. Comp. Math.
		18, 2003, 127-147.
		
		
		
		\bibitem{PGT} P. Gavruta, On the duality of fusion frames. J. Math. Anal. Appl. 333 (2007), 871-879.
		
		\bibitem{Ghiati} Ghiati, M., Rossafi, M., Mouniane, M. et al. Controlled continuous 
		$\ast$-g-frames in Hilbert 
		$C^{\ast}$-modules. J. Pseudo-Differ. Oper. Appl. 15, 2 (2024). \\
		\url {https://doi.org/10.1007/s11868-023-00571-1}
		
		\bibitem{KAI} G. Kaiser, \emph{A Friendly Guide to Wavelets}, Birkh¨auser, Boston, 1994.
		
		  \bibitem{Karara}  Karara, A., Rossafi, M., Touri, A. K-biframes in Hilbert spaces. J Anal (2024). \url{https://doi.org/10.1007/s41478-024-00831-3}
		
		\bibitem{Massit} Massit, H., Rossafi, M., Park, C. Some relations between continuous generalized frames. Afr. Mat. 35, 12 (2024). \\\url{https://doi.org/10.1007/s13370-023-01157-2}
		
		\bibitem{KJP} K. J. Pawan, P. A. Om, Functional Analysis. New Age International Publisher, New Delhi (1995).	
		
			\bibitem{Rossafi} Rossafi, M., Nhari, FD., Park, C. et al. Continuous g-Frames with 
		$C^{\ast}$-Valued Bounds and Their Properties. Complex Anal. Oper. Theory 16, 44 (2022). \\\url{https://doi.org/10.1007/s11785-022-01229-4}
		
		\bibitem{Kabbaj} Rossafi, M., Kabbaj, S. Generalized frames for 
		. Iran. J. Math. Sci. Inform. 17(1), 1–9 (2022). \url{https://doi.org/10.52547/ijmsi.17.1.1}
		
		\bibitem{Chouchene} Rossafi, M., Ghiati, M., Mouniane, M. et al. Continuous frame in Hilbert $C^{\ast}$-modules. J Anal 31, 2531–2561 (2023). \\\url{https://doi.org/10.1007/s41478-023-00581-8}
		
		
		
		
	\end{thebibliography}
\end{document}